\numberwithin{equation}{section}
\theoremstyle{plain}
\newtheorem{assumption}{Assumption}
\newtheorem{theorem}{Theorem}[section]
\newtheorem{lemma}[theorem]{Lemma}
\newtheorem{proposition}[theorem]{Proposition}
\theoremstyle{definition}
\newtheorem{definition}{Definition}[section]
\newtheorem{example}{Example}
\newtheorem{remark}[theorem]{Remark}
\newcommand{\N}{\mathbb{N}}
\newcommand{\R}{\mathbb{R}}
\newenvironment{continuance}[1]
  {\newcommand\continuanceref{\ref{#1}}\continuancex}
  {\endcontinuancex}
\begin{document}

%\begin{frontmatter}
\title{
Power in High-Dimensional Testing Problems
\footnote{
Financial support by the Danish National Research Foundation (Grant DNRF 78, CREATES) is gratefully acknowledged; David Preinerstorfer's research was partly supported by the Program of Concerted Research Actions (ARC) of the Universit\'e libre de Bruxelles.
We are grateful to the Co-Editor
%Ulrich K.~M\"uller 
and three anonymous referees for comments and suggestions which greatly helped to improve this paper.
We would also like to thank Marc Hallin, Michael Jansson, Davy Paindaveine, Lukas Steinberger and Michael Wolf for helpful comments and discussions.	
}
}

\author{
\begin{tabular}{c}
Anders Bredahl Kock \\ 	
{University of Oxford} \\
{CREATES, Aarhus University} \\
{\href{mailto:anders.kock@economics.ox.ac.uk}{anders.kock@economics.ox.ac.uk}} 
\end{tabular}
\and
\begin{tabular}{c}
David Preinerstorfer \\ 
{ECARES} \\ 
{Universit\'e libre de Bruxelles} \\ {\href{mailto:david.preinerstorfer@ulb.ac.be}{david.preinerstorfer@ulb.ac.be}}
\end{tabular}
}

\bigskip

\date{
\begin{tabular}{c}
First version: September 2017 \\
This version: January 2019
\end{tabular}
}

\maketitle

\begin{abstract}
	\cite{fan2015} recently introduced a remarkable method for increasing asymptotic power of 
	tests in high-dimensional testing problems. If 
	applicable to a given test, 
	their \textit{power enhancement principle} leads to 
	an improved test that has the same asymptotic size, uniformly non-inferior asymptotic power, and is consistent against a strictly broader range of alternatives than the initially given test. 
We study under which conditions this method can be applied and show the following: In asymptotic regimes where the dimensionality of the parameter space is fixed as sample size increases, there often exist tests that can not be further improved with the power enhancement principle. 
However, when the dimensionality of the parameter space increases sufficiently slowly with sample size and a marginal local asymptotic normality (LAN) condition is satisfied, \textit{every} test with asymptotic size smaller than one can be improved with the power enhancement principle. While the marginal LAN condition alone does not allow one to extend the latter statement to all rates at which the dimensionality increases with sample size, we give sufficient conditions under which this is the case.
\end{abstract}

\vspace{0.5cm}
\noindent
\textbf{Keywords:} High-dimensional testing problems; power enhancement principle; power enhancement component; asymptotic enhanceability; marginal LAN. \\[3pt]
\textbf{JEL Classification:} C12.

\section{Introduction}\label{sec:intro}
The effect of dimensionality on power properties of tests has witnessed a lot of research in recent years. One common goal is to construct tests with good asymptotic size and power properties for testing problems where the length of the parameter vector involved in the hypothesis to be tested increases with sample size.
In the context of high-dimensional cross-sectional testing problems \cite{fan2015} introduced a \textit{power enhancement principle}, which essentially works as follows: given an initial test, one tries to find another test that has asymptotic size zero and is consistent against sequences of alternatives the initial test is not consistent against. If such an auxiliary test, a \textit{power enhancement component} of the initial test, can be found, one can construct an improved test that has better asymptotic properties than the initial test. In particular, one can obtain a test that (i) has the same asymptotic size as the initial test, (ii) has uniformly non-inferior asymptotic power when compared to the initial test, and (iii) is consistent against all sequences of alternatives the auxiliary test is consistent against. As a consequence of (iii) the improved test is consistent against sequences of alternatives the initial test is not consistent against. \cite{fan2015} illustrated their power enhancement principle by showing how an initial test based on a weighted Euclidean norm of an estimator can be made consistent against sparse alternatives, which it could previously not detect, by incorporating a power enhancement component based on the supremum norm of the estimator. The existence of a suitable power enhancement component in the specific situation they consider, however, does not answer the following general questions:

\begin{itemize}
\item Under which conditions does a test admit a power enhancement component?
\item And, similarly, do there exist tests for which no power enhancement components exist?
\end{itemize}

In this paper we address these questions in a general setup. In the sequel we call tests that do (not) admit a power enhancement component \textit{asymptotically (un)enhanceable}. We first consider the classic asymptotic setting, where the dimension of the parameter vector being tested remains fixed as the sample size tends to infinity. Under fairly weak assumptions on the model we prove (cf.~Theorem \ref{thm:finited}) that in this asymptotic regime tests exist that are asymptotically unenhanceable. That is, in such settings there exist tests that can not be further improved by the power enhancement principle. Furthermore, such tests exist with any asymptotic size $\alpha \in (0, 1]$. Moreover, Wald-type tests often turn out to be asymptotically unenhanceable under weak regularity conditions (cf.~Theorem \ref{thm:finited2}). The situation changes drastically when the dimension increases (unboundedly) with the sample size. Here we show (cf.~Theorem \ref{thm:main2}) that if the models under consideration satisfy a mild ``fixed-dimensional'' (i.e., ``marginal'') local asymptotic normality (LAN) assumption, then for all sufficiently slowly increasing growth rates of the dimension of the parameter vector \textit{every} test with asymptotic size less than one is asymptotically enhanceable. We stress that these growth rates can be chosen to be arbitrarily slow, but can \textit{not} be chosen to be arbitrarily rapid in general. Two aspects of this result may be somewhat surprising: Firstly, one may have conjectured that the behavior from the fixed-dimensional case carries over to growth rates that diverge ``slowly enough'', which, however, is not the case. Secondly, given the non-existence of asymptotically unenhanceable tests for slow growth rates, it would be natural to expect that the fixed-dimensional behavior breaks down also for growth rates that diverge ``quickly''. While this is often correct, the marginal LAN condition used in Theorem \ref{thm:main2} is not sufficient to conclude such a behavior, as we demonstrate in our Example \ref{ex:counter}. Guided by this example, we then introduce and discuss a fairly natural additional assumption, under which we finally show in Theorem \ref{thm:main} that for \textit{any} growth rate of the dimension of the parameter vector \textit{every} test of asymptotic size less than one is asymptotically enhanceable. 

We would like to stress that even if a test is asymptotically enhanceable, the test might still be ``optimal'' within a restricted class of tests (e.g., satisfying certain invariance properties), or the test might still have ``optimal detection properties'' against certain subsets of the alternative (some of the  references given in Section~\ref{sec:literature} below establish ``optimality'' properties of this type). Hence, our findings are not in contradiction with such results. Instead, our results provide an alternative perspective on power properties in high-dimensional testing problems.

In the subsequent section we shall illustrate our findings in the context of a Gaussian location model. The results obtained are special cases of the general results developed in this article. To develop some intuitive understanding, however, we shall provide direct arguments, which exploit specific properties of the Gaussian location model.

\subsection{Asymptotic enhanceability in Gaussian location models}\label{sec:Gloc}

We denote the $m$-variate Gaussian distribution with mean $\mu$ and covariance matrix $\Sigma$ by $N_m(\mu, \Sigma)$, and throughout this section we fix the level of significance $\alpha \in (0, 1)$. Let $X_1, \hdots, X_n$ be i.i.d.~with $X_i \sim N_{d(n)}(\theta, I_{d(n)})$ for $i = 1, \hdots, n$, and where $I_m$ denotes the $m$-dimensional identity matrix. We want to test whether the unknown mean vector $\theta \in \R^{d(n)}$ equals zero. Define $Z_n := n^{-1/2} \sum_{i = 1}^n X_i \sim N_{d(n)}(\sqrt{n} \theta, I_{d(n)})$ and note that $Z_n$ is a sufficient statistic for $\theta$. Let $\phi_n$ be the test that rejects if $\|Z_n\|_2^2$ exceeds the $1-\alpha$ quantile of the $\chi^2$-distribution with $d(n)$ degrees of freedom. Clearly, $\phi_n$ has size $\alpha$ for all $n \in \N$. 

In a setting where $d(n)$ is fixed to some $d \in \N$ as sample size $n \to \infty$, the sequence of tests $\phi_n$ is consistent against a sequence of alternatives $\theta_n$ if and only if $\sqrt{n} \|\theta_n\|_2 \to \infty$. A contiguity argument now shows that there does \emph{not} exist a sequence of tests of asymptotic size $0$, which is consistent against a sequence of alternatives $\phi_n$ is not consistent against.\footnote{Suppose the sequence of tests $\nu_n$ has asymptotic size $0$ and $\sqrt{n} \|\theta_n\|_2 \not \to \infty$. Then, $N_{d}(0, I_d)$ and $N_{d}(\sqrt{n} \theta_n, I_d)$ are mutually contiguous along a subsequence $n'$. Hence, along $n'$ the power of $\nu_n$ against $\theta_n$ converges to $0$.} Therefore, in the regime $d(n) = d$ the sequence $\phi_n$ does not admit a power enhancement component and is thus asymptotically unenhanceable (using the terminology introduced in the previous section).

When $d(n)$ diverges increasingly to $\infty$, $\phi_n$ is consistent against a sequence~$\theta_n$ if and only if $d(n)^{-1/2} n  \|\theta_n\|_2^2 \to \infty$ (cf.~Lemma~\ref{lem:auxGaussloc} in  Section~\ref{sec:consreg}). In contrast to above, one can now construct a sequence of tests with asymptotic size $0$, and which is consistent against a sequence of alternatives that $\phi_n$ is not consistent against: Consider, for example, $\theta_n = a_n e_1(d(n))$ where  $a_n = \sqrt{\log(d(n))/(2n)}$ and $e_i(m)$ denotes the $i$-th element of the canonical basis in $\R^m$. Note that $\phi_n$ is not consistent against~$\theta_n$. Nevertheless, the test $\nu_n$, which rejects if the z-test statistic $Z_n^{(1)}=e_1(d(n))' Z_n \sim N(\sqrt{n} \theta_n^{(1)}, 1)$ is in absolute value greater than $(\sqrt{n}a_n)^{1/2}=[\log(d(n))/2]^{1/4}$, has asymptotic size $0$ and is consistent against $\theta_n$. This implies that in case $d(n)$ diverges to $\infty$, regardless at which rate, $\phi_n$ is asymptotically enhanceable, because it admits the enhancement component $\nu_n$. Clearly, this argument relied on properties of the specific test $\phi_n$ under consideration. To see that \emph{any} sequence of tests $\varphi_n$, say, with asymptotic size $\alpha$ is asymptotically enhanceable is more involved. We argue as follows: (a) by a sufficiency argument we can assume that $\varphi_n$ depends on $X_1, \hdots, X_n$ only via the sufficient statistic $Z_n \sim N_{d(n)}(\sqrt{n} \theta, I_{d(n)})$; (b) arguing as in Section 3.3.7 of \cite{ingster} %(cf.~in particular their Equations 3.93 and 3.94) 
delivers the following: let $\mathbb{Q}_{n,0} = N_{d(n)}(0, I_{d(n)})$, i.e., the distribution of $Z_n$ under the null, and define the mixture $\mathbb{Q}_n = d(n)^{-1} \sum_{i = 1}^{d(n)} \mathbb{Q}_{n,i}$, where  $\mathbb{Q}_{n,i} = N_{d(n)}(\sqrt{n} a_n e_{i}(d(n)), I_{d(n)})$ is the distribution of $Z_n$ under the alternative $a_n e_i(d(n))$. Note that the likelihood-ratio statistic of $\mathbb{Q}_n$ w.r.t.~$\mathbb{Q}_{n,0}$ is given by $L_n = d(n)^{-1}\sum_{i = 1}^{d(n)} e^{\sqrt{n}a_n z_i -n a_n^2/2}$. Denote the expectation operators w.r.t.~$\mathbb{Q}_n$ and $\mathbb{Q}_{n,i}$ by $\mathbb{E}^Q_{n}$ and $\mathbb{E}^Q_{n,i}$, respectively. Then, 
%for any $\varphi: \R^{d(n)} \to [0, 1]$ measurable
%
\begin{equation}\label{eq:gauss1}
	\big|\mathbb{E}_{n,0}^Q(\varphi_n) - d(n)^{-1} \sum_{i = 1}^{d(n)} \mathbb{E}_{n,i}^Q(\varphi_n)\big|^2 %= \left|\mathbb{E}^Q_{n,0}(\varphi) - \mathbb{E}^Q_{n}(\varphi)\right|^2
	= \left|\mathbb{E}^Q_{n,0}(\varphi_n(1 - L_n))\right|^2 \leq \mathbb{E}^Q_{n,0}\left((1 - L_n)^2\right) = \mathbb{E}^Q_{n,0}\left(L_n^2\right) -1,
\end{equation}
where we used $d(n)^{-1} \sum_{i = 1}^{d(n)} \mathbb{E}_{n,i}^Q(\varphi_n) = \mathbb{E}_{n}^Q(\varphi_n) = \mathbb{E}_{n,0}^Q(\varphi_nL_n)$, Jensen's inequality and $\mathbb{E}_{n,0}^Q(L_n) = 1$. From the moment-generating-function of a normal distribution we obtain
\begin{equation}\label{eq:gauss2}
	\mathbb{E}^Q_{n,0}\left(L_n^2\right) -1 = d(n)^{-2} \sum_{i = 1}^{d(n)} \sum_{j = 1}^{d(n)} \mathbb{E}^Q_{n,0}\left(e^{\sqrt{n}a_n(z_i+z_j) -n a_n^2}\right) - 1 \leq d(n)^{-1/2} \to 0.
\end{equation}
Therefore, existence of a sequence $\theta_n = a_n e_{i(n)}(d(n))$ against which $\varphi_n$ has asymptotic power at most $\alpha$ follows. (c) Finally, observe that the test $\nu_n$, say, which rejects if the z-test statistic $Z_n^{(i(n))} = e_{i(n)}(d(n))'Z_n$ exceeds $(\sqrt{n} a_n)^{1/2}$ in absolute value has asymptotic size $0$ and is consistent against this sequence $\theta_n$. Hence $\varphi_n$ permits the enhancement component $\nu_n$ and is thus asymptotically enhanceable. Note that this holds for \emph{any} rate at which $d(n)$ diverges to $\infty$.

\subsection{Related literature}\label{sec:literature}

The setting we consider in our main results (Theorems \ref{thm:main2} and \ref{thm:main}) requires neither independent nor identically distributed data, and covers many situations of practical interest. On the other hand, for concrete high-dimensional testing problems, and under suitable assumptions on how fast the dimension of the parameter to be tested is allowed to increase with sample size, many articles have considered the construction of tests with good size and power properties. For a discussion of several examples in the context of financial econometrics (testing implications from multi-factor
pricing theory) or panel data models (tests for cross-sectional independence in mixed effect panels) we refer to \cite{fan2015}. 
Testing problems in one- or two-sample multivariate location models, i.e., tests for the hypothesis whether the mean vector of a population is zero, or whether the mean vectors of two populations are identical, were analyzed in \cite{dempster}, \cite{bai1996}, \cite{srivastava2008}, 
\cite{srivastava2013}, \cite{tony2014two}, and \cite{chakraborty2017}; in this context the articles \cite{pinelis2010asymptotic, pinelisI}, where the asymptotic efficiency of tests based on different $p$-norms relative to the Euclidean-norm were studied, need to be mentioned. For some concrete examples of high-dimensional location models arising in empirical economics see the discussion in Section 2.1 of \cite{kasy}. In regression models power properties of F-tests when the dimension of the parameter vector increases with sample size have been investigated in \cite{wang2013}, \cite{zhong2011}, and \cite{steinberger2016}. A question of particular interest in regression models is whether \emph{any} of the regressor coefficients is different from zero. This results in a hypothesis that involves, potentially after recentering, all parameters of the (high-dimensional) model. Tests for such problems are among the standard output in any econometric software package, and are routinely used in the context of model specification and selection and in judging the explanatory power of a given model. In the context of testing hypotheses on large covariance matrices, e.g., its diagonality or sphericity, properties of tests were studied in \cite{ledoit2002some}, \cite{srivastava2005}, \cite{baijiang2009}, and \cite{onatski2013, onatski2014}. For properties of tests for high-dimensional testing problems arising in
spatial statistics  
we refer to 
\cite{cai2013distributions}, \cite{ley2015}, and \cite{cutting2017testing}. These testing problems all concern a ``high-dimensional'' parameter the dimension of which increases with sample size. 

An article that obtained results somewhat similar to ours is \cite{janssen2000},
where local power properties of goodness-of-fit tests (cf.~also \cite{ingster}) are studied. For such testing problems it was shown, among other things, that any test can have high local asymptotic power relative to its asymptotic size only against alternatives lying in a finite-dimensional subspace of the parameter space. Although related, our results are qualitatively different, because asymptotic enhanceability is an intrinsically non-local concept (cf.~Remark \ref{rem:enh}), and because we do not consider testing problems with infinitely many parameters for any sample size. Instead we consider situations where the number of parameters can increase with sample size at different rates. Results on power properties of tests in situations where the sample size is fixed while the number of parameters diverges to infinity have been obtained in \cite{lockhart2016}, who showed that in such scenarios the asymptotic power of invariant tests (w.r.t.~various subgroups of the orthogonal group) against contiguous alternatives coincides with their asymptotic size. 
\section{Framework}\label{sec:fram}

The general framework in this article is a double array of experiments
\begin{equation}\label{eqn:darray}
\left(\Omega_{n, d}, \mathcal{A}_{n, d}, \{\mathbb{P}_{n, d, \theta}: \theta \in \Theta_d\}\right) \quad \text{ for } n \in \N \text{ and } d \in \N,
\end{equation}
where for every $n \in \N$ and $d \in \N$ the tuple $(\Omega_{n, d}, \mathcal{A}_{n, d})$ is a measurable space, i.e., the sample space, and $\{\mathbb{P}_{n, d, \theta}: \theta \in \Theta_d\}$ is a set of probability measures on that space, i.e., the set of possible distributions of the data observed. For every $d \in \N$ the parameter space $\Theta_d$ is assumed to be a subset of $\R^{d}$ that contains a neighborhood of the origin. The two indices $n \in \N$ and $d \in \N$ should be interpreted as ``sample size'' and as the ``dimension of the parameter space'', respectively. The expectation w.r.t. $\mathbb{P}_{n, d, \theta}$ is denoted by $\mathbb{E}_{n, d, \theta}$.

We consider the situation where one wants to test (possibly after a suitable re-parameterization) whether or not the unknown parameter vector $\theta$ equals zero. Such problems have been studied extensively in the classic asymptotic framework where $d$ is \textit{fixed} and $n \to \infty$, i.e., properties of sequences of tests for the testing problem
\begin{equation}\label{eqn:testprobfix}
H_0: \theta = 0 \in \Theta_{d} \quad \text{ against } \quad H_1: \theta \in \Theta_{d}\setminus \{0\}
\end{equation}
are studied in the sequence of experiments
\begin{equation}\label{eqn:expfix}
\left(\Omega_{n, d}, \mathcal{A}_{n, d}, \{\mathbb{P}_{n, d, \theta}: \theta \in \Theta_d\}\right) \quad \text{ for } n \in \N.
\end{equation}
In contrast to such an analysis, the framework we are interested in is the more general situation in which $d = d(n)$ is a non-decreasing sequence. More precisely, we study properties of sequences of tests for the sequence of testing problems
\begin{equation}\label{eqn:testprob}
H_0: \theta = 0 \in \Theta_{d(n)} \quad \text{ against } \quad H_1: \theta \in \Theta_{d(n)}\setminus \{0\}
\end{equation}
in the corresponding sequence of experiments
\begin{equation}\label{eqn:exp}
(\Omega_{n, d(n)}, \mathcal{A}_{n, d(n)}, \{\mathbb{P}_{n, d(n), \theta}: \theta \in \Theta_{d(n)}\}) \quad \text{ for } n \in \N,
\end{equation}
for all possible rates $d(n)$ at which the dimension of the parameter space can increase with $n$. The following running example illustrates our framework (for several more examples we refer to the detailed treatment of the Gaussian location model in Section \ref{sec:Gloc} and to the end of Section \ref{subsec:ex}). Here, for $j \in \N$, $\lambda_j$ denotes Lebesgue measure on the Borel sets of $\R^j$.

\begin{example}[Linear regression model]\label{ex:linreg}
Consider the linear regression model $y_i=x_{i,d} '\theta+u_i,\ i=1,...,n$ where $\theta \in \Theta_d = \R^d$. One must distinguish between the cases where the covariates $x_{i,d}$ are fixed or random:

\noindent \textit{Fixed covariates:} Here the sample space $\Omega_{n,d}$ equals $\R^n$, $\mathcal{A}_{n,d}$ is the corresponding Borel $\sigma$-field, and $x_{i,d} = (x_{1i}, \hdots, x_{di})'$ for $\mathsf{X} = (x_{kl})_{k,l = 1}^{\infty}$ a given double array of real numbers. Assuming that the error terms $u_i$ are i.i.d.~with $u_1\sim F$ having $\lambda_1$-density $f$, it follows that $y_i$ has $\lambda_1$-density $g_{i}(y)=f(y-x_{i,d}'\theta)$, and $\mathbb{P}_{n,d,\theta}$ is the corresponding product measure. 

\noindent \textit{Random covariates:} Here the sample space $\Omega_{n,d}$ equals $\bigtimes_{i=1}^n(\R\times \R^{d})$ and $\mathcal{A}_{n,d}$ is the corresponding Borel $\sigma$-field. Letting the error terms be as in the case of fixed covariates, and the $(u_i, x_{i,d})$ be i.i.d.~and so that $x_{1,d}$ is independent of $u_1$ with distribution $K_d$ on the Borel sets of $\R^d$, we have that $\mathbb{P}_{n,d,\theta}$ is the $n$-fold product of the measure with density $f(y-x'\theta)$ w.r.t.~$(\lambda_1\otimes K_d)(y,x)$.
\end{example}

\section{Asymptotic enhanceability}\label{sec:aden}

After re-formulating the main idea in \cite{fan2015} in terms of tests instead of test statistics, a corresponding power enhancement principle can be formulated in our general context: Let $d(n)$ be a non-decreasing sequence of natural numbers and let $\varphi_n: \Omega_{n, d(n)} \to [0, 1]$ be measurable, i.e., $\varphi_n$ is a sequence of tests for \eqref{eqn:testprob} in \eqref{eqn:exp}. Suppose that it is possible to find another sequence of tests $\nu_n: \Omega_{n, d(n)} \to [0, 1]$ with \textit{asymptotic size} $0$, i.e.,
\begin{equation}\label{eqn:asyref}
\limsup_{n \to \infty} \mathbb{E}_{n, d(n), 0} (\nu_n) = 0,
\end{equation}
and such that $\nu_n$ is consistent against at least one sequence $\theta_n \in \Theta_{d(n)}$ which the initial test $\varphi_n$ is not consistent against, i.e., 
\begin{equation}\label{eqn:enhag}
1 = \lim_{n \to \infty} \mathbb{E}_{n, d(n), \theta_n}(\nu_n) > \liminf_{n \to \infty} \mathbb{E}_{n, d(n), \theta_n}(\varphi_n).
\end{equation}
In this case $\varphi_n$ and $\nu_n$ can be combined into the test 
\begin{equation}\label{eqn:PEP}
\psi_n = \min(\varphi_n + \nu_n, 1),
\end{equation}
which has the following properties (as is easy to verify):

\begin{enumerate}
\item $\psi_n$ has the same asymptotic size as $\varphi_n$.
\item $\psi_n \geq \varphi_n$, implying that $\psi_n$ has nowhere smaller power than $\varphi_n$. 
\item $\psi_n$ is consistent against the sequence of alternatives $\theta_n$ (which $\varphi_n$ is not consistent against).
\end{enumerate}

This method of obtaining a sequence of tests $\psi_n$ with improved asymptotic properties from a given sequence $\varphi_n$ is applicable whenever $\nu_n$ with the above properties can be determined. A sequence of tests $\varphi_n$ for which there exists such a corresponding sequence of tests $\nu_n$, i.e., an \textit{enhancement component}, will subsequently be called \textit{asymptotically enhanceable}. For simplicity, this is summarized in the following definition.
\begin{definition}\label{def:enh}
Given a non-decreasing sequence $d(n)$ in $\N$, a sequence of tests $\varphi_n: \Omega_{n, d(n)} \to [0, 1]$ is called asymptotically enhanceable, if there exists a sequence of tests $\nu_n: \Omega_{n, d(n)} \to [0, 1]$ and a sequence $\theta_n \in \Theta_{d(n)}$ such that \eqref{eqn:asyref} and \eqref{eqn:enhag} hold. The sequence $\nu_n$ will then be called an enhancement component of $\varphi_n$.
\end{definition}
Before we formulate our main question, we make three observations:
\begin{remark}\label{rem:enh}
Any sequence $\theta_n$ as in Definition \ref{def:enh} must be such that $\mathbb{P}_{n, d(n), \theta_{n}}$ and $\mathbb{P}_{n, d(n), 0}$ are not contiguous; in fact, must be such that for any subsequence $n'$ of $n$ the measures $\mathbb{P}_{n', d(n'), \theta_{n'}}$ and $\mathbb{P}_{n', d(n'), 0}$ are not contiguous. Hence, asymptotic enhanceability as introduced in Definition \ref{def:enh} is a ``non-local'' property in the sense that whether or not a sequence of tests can be asymptotically enhanced, depends only on its power properties against sequences of alternatives $\theta_n$ such that $\mathbb{P}_{n, d(n), \theta_{n}}$ and $\mathbb{P}_{n, d(n), 0}$ are not contiguous along any subsequence $n'$ of~$n$.
\end{remark}

{\color{black}
\begin{remark}\label{rem:testable}
For $d(n)$ a nondecreasing sequence of natural numbers, call a sequence $\theta_n$ ``asymptotically distinguishable from the null'' if there exists a sequence of tests $\nu_n$ such that $$\lim_{n \to \infty} \mathbb{E}_{n, d(n), 0} (\nu_n) = 0 \quad \text{ and } \quad \lim_{n \to \infty} \mathbb{E}_{n, d(n), \theta_n}(\nu_n) = 1$$ hold. From Definition \ref{def:enh} it is then easily seen that a test $\varphi_n$ is \emph{not} asymptotically enhanceable if and only if it is consistent against \emph{all} sequences $\theta_n$ that are asymptotically distinguishable from the null. In testing problems that are ``asymptotically non-testable'' in the sense that \emph{no} sequence of alternative exists that is asymptotically distinguishable from the null, no test can be asymptotically enhanceable. The problems we consider in our theorems are not of this degenerate type, i.e., they are ``asymptotically testable'';  cf.~also the discussion surrounding Example~\ref{ex:counter}.\footnote{More specifically, in Theorems \ref{thm:finited} and \ref{thm:finited2} this is seen to follow immediately from the assumptions imposed; and for the remaining results note that the conclusion of any sequence of tests of size smaller than $1$ being asymptotically enhanceable rules out asymptotic non-testability of the respective sequence of testing problems.}
\end{remark}	
}

\begin{remark}
In the context of Definition \ref{def:enh} one could argue that instead of \eqref{eqn:asyref} one should require the stronger property $\mathbb{P}_{n, d(n), 0}(\nu_n = 0) \to 1$ as $n \to \infty$ (hoping for ``better'' size properties of the enhanced test $\min(\varphi_n + \nu_n, 1)$ in finite samples). In particular, the constructions in \cite{fan2015} (formulated in terms of test statistics) are based on a corresponding property. But note that if $\nu_n$ is a sequence of tests as in Definition \ref{def:enh}, the sequence $\nu_n^* = \mathbf{1}\{\nu_n \geq 1/2\}$ is a sequence of tests as in Definition \ref{def:enh} that furthermore satisfies $\mathbb{P}_{n, d(n), 0}(\nu_n^* = 0) \to 1$ as $n \to \infty$. Hence, requiring existence of tests such that $\mathbb{P}_{n, d(n), 0}(\nu_n = 0) \to 1$ holds instead of \eqref{eqn:asyref} would lead to an equivalent definition. 
\end{remark}

\section{Main question}\label{sec:mainq}

The power enhancement principle tells us how one can improve a sequence of tests $\varphi_n$ provided an enhancement component $\nu_n$ is available. Thus, a natural question now is: when does such an enhancement component actually exist? Similarly, in a situation where there are many possible enhancement components $\nu_n$ available (each improving power against a different sequence $\theta_n$), one can repeatedly apply the power enhancement principle. Then, the question arises: when should one stop enhancing? It is quite tempting to argue that one should keep enhancing until a test is obtained that is not asymptotically enhanceable anymore. But this suggestion is certainly practical only if there exists a test that can not be asymptotically enhanced. This raises the following question:
\begin{center}
\textit{Does there exist a sequence of tests with asymptotic size smaller than one that can not be asymptotically enhanced?}
\end{center}
Note that if the size requirement is dropped in the above question, then the answer is trivially yes, since one can then choose $\varphi_n \equiv 1$, a test that is obviously not asymptotically enhanceable. But this is of no practical use.

The results discussed in Section \ref{sec:Gloc} suggest that the answer to the above question crucially depends on the dimensionality $d(n)$. We first consider the fixed-dimensional case $d(n) \equiv d \in \N$. Here, it turns out that the results from the Gaussian location model discussed in Section \ref{sec:Gloc} are representative in the sense that under weak assumptions there exist sequences of tests that are not asymptotically enhanceable even if the model is not Gaussian. We shall now present a result that supports this claim in the i.i.d.~case under an $\mathbb{L}_2$-differentiability (cf.~Definition 1.103 of \cite{liese}) and a separability condition on the model:
\begin{assumption}\label{as:finited}
For every $n \in \N$ it holds that
\begin{equation}
\Omega_{n, d} = \bigtimes_{i = 1}^n \Omega, \quad \mathcal{A}_{n,d} = \bigotimes_{i = 1}^n \mathcal{A}, \quad \text{and} \quad \mathbb{P}_{n, d, \theta} = \bigotimes_{i = 1}^n \mathbb{P}_{d, \theta} \text{ for every } \theta \in \Theta_d,
\end{equation}
where $\Omega = \Omega_{1, d}$, $\mathcal{A} = \mathcal{A}_{1, d}$ and $\mathbb{P}_{d, \theta} = \mathbb{P}_{1, d, \theta}$. The family $\{\mathbb{P}_{d, \theta}: \theta \in \Theta_d\}$ is $\mathbb{L}_2$-differentiable at $0$ with nonsingular information matrix. Furthermore, for every $\varepsilon > 0$ such that $\Theta_d$ contains a $\theta$ with $\|\theta\|_2 \geq \varepsilon$  there exists a sequence of tests $\psi_{n, d}(\varepsilon): \Omega_{n,d} \to [0, 1]$ such that as $n \to \infty$
\begin{equation}
\mathbb{E}_{n, d, 0}(\psi_{n, d}(\varepsilon)) \to 0 \quad \text{ and } \inf_{\theta \in \Theta_d: \|\theta\|_2 \geq \varepsilon} \mathbb{E}_{n, d, \theta}(\psi_{n, d}(\varepsilon)) \to 1.
\end{equation}
\end{assumption}

Apart from the i.i.d.~condition Assumption \ref{as:finited} is quite weak. In Chapter 10.2 of \cite{van2000asymptotic} a Bernstein-von Mises theorem, attributed to Le Cam, is established under the same set of assumptions. As pointed out in \cite{van2000asymptotic}, a sufficient condition for the existence of tests $\psi_{n,d}(\varepsilon)$ as in Assumption \ref{as:finited} is the existence of a sequence of uniformly consistent estimators $\hat{\theta}_n$ for $\theta$, in which case one can use $\psi_{n,d}(\varepsilon) = \mathbf{1}\{\|\hat{\theta}_n\|_2 \geq \varepsilon/2\}$ (cf.~also the discussion after Theorem \ref{thm:finited2} in Section \ref{sec:finited2}). The proof of the subsequent theorem, which can be found in Section \ref{sec:prooffinited}, combines the observations in Remarks \ref{rem:enh} and \ref{rem:testable} with a minor variation of the argument in the proof of Lemma 10.3 in \cite{van2000asymptotic}. 

\begin{theorem}\label{thm:finited}
Let $d(n) \equiv d$ for some $d\in \N$ and assume that Assumption \ref{as:finited} holds. Then, for every $\alpha \in (0, 1]$ there exists a sequence of tests with asymptotic size $\alpha$ that is not asymptotically enhanceable. 
\end{theorem}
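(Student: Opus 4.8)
The plan is to combine the reformulation of non-enhanceability in Remark~\ref{rem:testable} with an explicit construction that mimics, in the general $\mathbb{L}_2$-differentiable setting, the role the $\chi^2$-test plays in the Gaussian location model of Section~\ref{sec:Gloc}: a fixed-critical-value test built from a \emph{bounded truncation} of the efficient score, augmented by the separating tests $\psi_{n,d}(\cdot)$ from Assumption~\ref{as:finited}. Throughout, write $X_i$ for the $i$-th coordinate projection on $\Omega_{n,d}=\bigtimes_{i=1}^n\Omega$, so that under $\mathbb{P}_{n,d,\theta}$ the $X_i$ are i.i.d.\ with law $\mathbb{P}_{d,\theta}$.

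\emph{Step 1 (reduction).} The case $\alpha=1$ is immediate, since $\varphi_n\equiv1$ is never asymptotically enhanceable; so fix $\alpha\in(0,1)$. By Remark~\ref{rem:testable} it suffices to produce a sequence of tests of asymptotic size $\alpha$ that is consistent against every sequence $\theta_n\in\Theta_d$ that is asymptotically distinguishable from the null. Any such $\theta_n$ must satisfy $\sqrt n\|\theta_n\|_2\to\infty$: otherwise $\sqrt n\|\theta_n\|_2$ is bounded along a subsequence, hence $\sqrt n\theta_n\to h\in\R^d$ along a further subsequence $n'$, and $\mathbb{L}_2$-differentiability at $0$ makes the product experiments LAN, so that $\mathbb{P}_{n',d,\theta_{n'}}$ is contiguous with respect to $\mathbb{P}_{n',d,0}$ (Le Cam's first lemma); then any sequence of tests whose $\mathbb{P}_{n,d,0}$-expectation tends to $0$ also has $\mathbb{P}_{n,d,\theta_n}$-expectation tending to $0$ along $n'$, contradicting asymptotic distinguishability (cf.~Remark~\ref{rem:enh}). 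It therefore remains to construct, for each $\alpha\in(0,1)$, a sequence of tests of asymptotic size $\alpha$ consistent against every $\theta_n$ with $\sqrt n\|\theta_n\|_2\to\infty$.

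\emph{Step 2 (construction).} Let $g$ be the $\mathbb{L}_2$-derivative of $\{\mathbb{P}_{d,\theta}:\theta\in\Theta_d\}$ at $0$, so $\mathbb{E}_{1,d,0}(g)=0$ and $\mathbb{E}_{1,d,0}(gg')=I$ is nonsingular. Fix $R>0$ large enough that the bounded function $\tilde g:=g\,\mathbf{1}\{\|g\|_2\le R\}$ has $V:=\mathrm{Var}_{1,d,0}(\tilde g)$ and $J:=\mathbb{E}_{1,d,0}(\tilde g\,g')$ nonsingular; this is possible because both converge to $I$ as $R\to\infty$. Writing $S_n:=n^{-1/2}\sum_{i=1}^n\bigl(\tilde g(X_i)-\mathbb{E}_{1,d,0}(\tilde g)\bigr)$ and letting $q$ be the $(1-\alpha)$-quantile of $\chi^2_d$, set
\begin{equation*}
\varphi_n^{(1)}:=\mathbf{1}\{\,S_n'\,V^{-1}S_n>q\,\}.
\end{equation*}
Separately, a diagonalization over $\{\psi_{n,d}(1/k)\}$, ranging over those $k$ with $\Theta_d\cap\{\|\theta\|_2\ge1/k\}\ne\emptyset$, produces a sequence $\varphi_n^{(2)}$ with $\mathbb{E}_{n,d,0}(\varphi_n^{(2)})\to0$ and $\mathbb{E}_{n,d,\theta_n}(\varphi_n^{(2)})\to1$ for every sequence $\theta_n$ with $\liminf_n\|\theta_n\|_2>0$: pick $n_1<n_2<\cdots$ so that for $n\ge n_k$ the size of $\psi_{n,d}(1/k)$ is below $1/k$ and its infimal power over $\{\theta\in\Theta_d:\|\theta\|_2\ge1/k\}$ exceeds $1-1/k$, and set $\varphi_n^{(2)}:=\psi_{n,d}(1/k)$ for $n_k\le n<n_{k+1}$. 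Finally define $\varphi_n:=\min(\varphi_n^{(1)}+\varphi_n^{(2)},1)$.

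\emph{Step 3 (verification and main obstacle).} Under $\mathbb{P}_{n,d,0}$ the central limit theorem gives $S_n\to N(0,V)$ in distribution, so $S_n'V^{-1}S_n\to\chi^2_d$ in distribution and $\mathbb{E}_{n,d,0}(\varphi_n^{(1)})\to\alpha$; together with $\varphi_n^{(1)}\le\varphi_n\le\varphi_n^{(1)}+\varphi_n^{(2)}$ and $\mathbb{E}_{n,d,0}(\varphi_n^{(2)})\to0$ this shows $\varphi_n$ has asymptotic size $\alpha$. For consistency, fix $\theta_n$ with $\sqrt n\|\theta_n\|_2\to\infty$ and argue subsequentially: along any subsequence either $\liminf\|\theta_n\|_2>0$ on a further subsequence, where $\mathbb{E}_{n,d,\theta_n}(\varphi_n)\ge\mathbb{E}_{n,d,\theta_n}(\varphi_n^{(2)})\to1$; or $\theta_n\to0$ on a further subsequence, along which $\sqrt n\|\theta_n\|_2\to\infty$ still holds. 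In the second case $\mathbb{L}_2$-differentiability gives $\mathbb{E}_{1,d,\theta}(\tilde g)-\mathbb{E}_{1,d,0}(\tilde g)=J\theta+o(\|\theta\|_2)$ as $\theta\to0$ --- decompose this difference using $\sqrt{p_\theta}-\sqrt{p_0}=\tfrac12(\theta'g)\sqrt{p_0}+r_\theta$ with $\|r_\theta\|_{L_2}=o(\|\theta\|_2)$ together with boundedness of $\tilde g$ --- so the mean of $S_n$ under $\mathbb{P}_{n,d,\theta_n}$ equals $\sqrt n\,J\theta_n+o(\sqrt n\|\theta_n\|_2)$, which diverges in Euclidean norm since $J$ is nonsingular, while the centered part $S_n-\mathbb{E}_{n,d,\theta_n}(S_n)$ is $O_{\mathbb{P}_{n,d,\theta_n}}(1)$ (bounded summands); hence $S_n'V^{-1}S_n\to\infty$ in $\mathbb{P}_{n,d,\theta_n}$-probability and $\mathbb{E}_{n,d,\theta_n}(\varphi_n)\ge\mathbb{E}_{n,d,\theta_n}(\varphi_n^{(1)})\to1$. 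In both cases the subsequential limit is $1$, so $\mathbb{E}_{n,d,\theta_n}(\varphi_n)\to1$; as every asymptotically distinguishable sequence satisfies $\sqrt n\|\theta_n\|_2\to\infty$, the test $\varphi_n$ is not asymptotically enhanceable. The main obstacle is precisely this last point: one needs a \emph{single} test consistent against escaping alternatives at \emph{every} rate, and because there is no slowest divergence rate of $\sqrt n\|\theta_n\|_2$, a test with a diverging critical value misses slowly escaping alternatives, while a naive use of the untruncated score is problematic since $g$ need not be square-integrable under $\mathbb{P}_{d,\theta}$ for $\theta\ne0$; a fixed critical value combined with a bounded truncation of the score resolves this, and checking that the statistic still diverges under all such alternatives --- via the expansion above --- is the step playing here the role of Lemma~10.3 in \cite{van2000asymptotic}, with the non-local alternatives taken care of by the $\psi_{n,d}(\cdot)$.
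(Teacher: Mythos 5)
Your proposal is correct and follows essentially the same route as the paper's proof: a truncated-score statistic with a fixed critical value (nonsingularity of the truncated moments secured by letting the truncation level grow), combined with the separating tests $\psi_{n,d}(\cdot)$ from Assumption~\ref{as:finited}, and a contiguity/LAN argument showing that only sequences with $\sqrt{n}\|\theta_n\|_2\to\infty$ can be asymptotically distinguished from the null. The only differences are cosmetic: you use the quadratic form $S_n'V^{-1}S_n$ and a diagonalization over $\psi_{n,d}(1/k)$ with a subsequence argument near $0$, whereas the paper uses $\|Z_n\|_2$ with a single $\psi_{n,d}(\varepsilon)$ and a uniform lower bound $\|\mathbb{E}_{d,\theta}(L_{C^*})-\mathbb{E}_{d,0}(L_{C^*})\|_2\geq c\|\theta\|_2$ on a fixed ball $B(\varepsilon)$.
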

Inspection of the proof shows that the asymptotically unenhanceable test constructed is a combination of a ``truncated score test'' and a suitably chosen sequence of tests $\psi_{n,d}(\varepsilon)$ as in Assumption~\ref{as:finited}. As discussed above, a sufficient condition for sequences of tests $\psi_{n,d}(\varepsilon)$ to exist is the existence of a sequence of uniformly consistent estimators. In case the centered sequence of distributions of these estimators is also uniformly tight over a neighborhood of $0$ in the parameter space (when scaled with the contiguity rate), then Wald-type tests based on this sequence of estimators can be shown to be asymptotically unenhanceable (without imposing Assumption \ref{as:finited}, or an i.i.d.~condition). A formal statement of this result, together with further discussion, is given in Theorem \ref{thm:finited2} in Section~\ref{sec:finited2}.
Theorems \ref{thm:finited} and \ref{thm:finited2} show that one can affirmatively answer the question raised above under weak assumptions in case the non-decreasing sequence $d(n)$ is constant (eventually). Hence we have, in some generality, answered the question raised above for the fixed-dimensional case, and the main question thus is: 
\begin{center}
\textit{Does there exist a sequence of tests with asymptotic size smaller than one that can not be asymptotically enhanced if $d(n)$ diverges with $n$?}
\end{center}

\section{Asymptotic enhanceability in high dimensions}\label{sec:main}

In this section we present our main results concerning the question raised in Section \ref{sec:mainq}. The setting described in Section \ref{sec:fram} is very general and we need to impose some further structural properties on the double array of experiments \eqref{eqn:darray} to answer the question. Our main assumption imposes only a \textit{marginal} local asymptotic normality (LAN) condition on the double array and is as follows:

\begin{assumption}[Marginal LAN]\label{as:lan}
There exists a sequence $s_n > 0$ such that for every \textit{fixed} $d \in \N$ 
\begin{equation}\label{eqn:defHnd}
H_{n, d} := \{h \in \R^d: s_{n}^{-1} h \in \Theta_d\} \uparrow \R^d,
\end{equation}
and such that the sequence of experiments
\begin{equation}\label{eqn:seqexlan}
\mathcal{E}_{n, d} = \left(\Omega_{n, d}, \mathcal{A}_{n, d}, \{\mathbb{P}_{n, d, s_{n}^{-1} h}: h \in H_{n, d} \}\right) \quad \text{ for } n \in \N
\end{equation}
is locally asymptotically normal with positive definite information matrix $\mathsf{I}_d$.\footnote{For completeness, a full statement of the local asymptotic normality condition is recalled from Definition~6.63 of \cite{liese} in the first sentence of Section \ref{sec:proofmainfirststep}.}
\end{assumption}
Note that Assumption \ref{as:lan} only imposes LAN to hold for \textit{fixed} $d$ as $n\to\infty$. Put differently, LAN is only imposed in classical ``fixed-dimensional'' experiments, in which LAN has been verified in many setups as illustrated in the examples in the next section. Frequently $s_{n}$ can be chosen as~$\sqrt{n}$. In principle we could extend our results to situations where $s_n$ is a sequence of invertible matrices that also depends on $d$, but for the sake of simplicity we omit this generalization. 

\subsection{Examples}\label{subsec:ex}
Before we answer the main question of Section \ref{sec:mainq}, we briefly discuss under which additional assumptions our running example satisfies Assumption \ref{as:lan}. Furthermore, we provide several references to other experiments that are LAN for fixed $d$, merely to illustrate the generality of our results.

\begin{continuance}{ex:linreg}\hfill \break
\noindent\textit{Fixed covariates:} Assume that $f$ is absolutely continuous with derivative $f'$ such that $0 < I_f=\int(f'/f)^2dF<\infty$. Suppose further that the double array $\mathsf{X}$ has the following properties: denoting $X_{n,d}=(x_{1,d},...,x_{n,d})'$, for every fixed $d$ and as $n \to \infty$ we have $\frac{1}{n}X_{n,d}'X_{n,d}\to Q_d$ where $Q_d$ has full rank (implying that eventually rank$(X_{n,d})=d$ holds), and $\max_{1\leq i\leq n}\allowbreak(X_{n,d}(X_{n,d}'X_{n,d})^{-1}X_{n,d}')_{ii}\allowbreak\to 0$. It then follows from Theorems 2.3.9 and 2.4.2 in \cite{rieder1994robust} that for every \textit{fixed} $d$ the corresponding sequence of experiments $\mathcal{E}_{n,d}$ in (\ref{eqn:seqexlan}) is LAN  with $s_n=\sqrt{n}$ and $\mathsf{I}_d=I_fQ_d$ being positive definite.

\noindent\textit{Random covariates:} Let the error terms satisfy the same assumptions as in the case of fixed covariates. If, furthermore, for every $d$ the matrix $\mathcal{K}_d=\int xx'dK_d(x)\in \R^{d\times d}$ has full rank $d$, it follows from Theorems 2.3.7 and 2.4.6 in \cite{rieder1994robust} that the corresponding experiment $\mathcal{E}_{n,d}$ in (\ref{eqn:seqexlan}) is LAN for every \textit{fixed} $d$ with $s_n=\sqrt{n}$ and $\mathsf{I}_d=I_f\mathcal{K}_d$ being positive definite.
\end{continuance}

\textit{Further examples:}
Local asymptotic normality for \textit{fixed} $d$ is often satisfied: For example, $\mathbb{L}_2$-differentiable models with i.i.d.~data are covered via Theorem 7.2 in \cite{van2000asymptotic}. Many examples of models being $\mathbb{L}_2$-differentiability and subsequently LAN for fixed $d$, including exponential families, can be found in Chapter 12.2 of \cite{lehmann}, while generalized linear models are covered in \cite{pupashenko2015l2}. Various time series models have been studied in, e.g., \cite{davies}, \cite{swensen1985asymptotic}, \cite{kreiss1987adaptive}, \cite{garel1995local} and \cite{hallin1999local}. For more details and further references on LAN in time series models see also the monographs \cite{dzhaparidze} and \cite{taniguchi2012asymptotic}.

\subsection{Asymptotic enhanceability for ``slowly'' diverging $d(n)$}
We first show that for arrays satisfying Assumption \ref{as:lan} there always exists a range of unbounded sequences $d(n)$ (dimensions of the parameter space) in which \textit{every} test with asymptotic size less than one is asymptotically enhanceable. The proof of this result is based on a generalization of the arguments given in the last paragraph of Section \ref{eq:gauss1} to double arrays of experiments \eqref{eqn:darray} satisfying Assumption \ref{as:lan}. To be precise, we use the following proposition, the proof of which is deferred to Section \ref{sec:proofmain}. 
\begin{proposition}\label{prop:1}
Suppose the double array \eqref{eqn:darray} satisfies Assumption \ref{as:lan}, and for every $d \in \N$ let $v_{1, d}, \hdots, v_{d, d}$ be an orthogonal basis of eigenvectors of $\mathsf{I}_{d}$ such that $v_{i, d}' \mathsf{I}_d v_{i, d} =1 $ for $i = 1, \hdots, d$. Then, there exists a non-decreasing unbounded sequence $p(n)$ in $\N$ and an $M \in \N$, such that for every non-decreasing unbounded sequence of natural numbers $d(n) \leq p(n)$:
\begin{enumerate}
	\item For every $n \geq M$ and $i = 1, \hdots, d(n)$ it holds that 
	\begin{equation}\label{eqn:inclcontained}
	\theta_{i,n} := s_n^{-1} \max\left(\sqrt{\log(d(n))/2}, 1\right)   v_{i,d(n)} \in \Theta_{d(n)},
	\end{equation}
	and every sequence of tests $\varphi_n: \Omega_{n, d(n)} \to [0, 1]$ satisfies
	\begin{equation}
	\mathbb{E}_{n, d(n), 0} (\varphi_n) - d(n)^{-1} \sum_{i = 1}^{d(n)} \mathbb{E}_{n, d(n), \theta_{i,n}} (\varphi_n) \to 0 \quad \text{ as }  n \to \infty.
	\end{equation}
	\item For every sequence $1 \leq i(n) \leq d(n)$ there exists a sequence of tests $\nu_n: \Omega_{n, d(n)} \to [0, 1]$ such that $\mathbb{E}_{n, d(n), 0}(\nu_n) \to 0$ and $\mathbb{E}_{n, d(n), \theta_{i(n),n}}(\nu_n) \to 1$ as $n \to \infty$.
\end{enumerate}
\end{proposition}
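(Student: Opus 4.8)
The plan is to lift the Ingster-type argument from the Gaussian location model of Section~\ref{sec:Gloc} to double arrays satisfying the marginal LAN condition, and then to pass from fixed $d$ to slowly growing $d(n)$ by a diagonalization. Throughout fix $d$, abbreviate $c_d := \max(\sqrt{\log d/2},1)$ and $h_{i,d} := c_d\,v_{i,d}$, so that the point in~\eqref{eqn:inclcontained} is $\theta_{i,n} = s_n^{-1}h_{i,d(n)}$. By Assumption~\ref{as:lan} the experiments $\mathcal{E}_{n,d}$ of~\eqref{eqn:seqexlan} are LAN with information $\mathsf{I}_d$; let $\Delta_{n,d}$ denote an associated central sequence, so that $\Delta_{n,d}\rightsquigarrow N_d(0,\mathsf{I}_d)$ under $\mathbb{P}_{n,d,0}$ and, by Le Cam's first and third lemmas, $\mathbb{P}_{n,d,s_n^{-1}h}\triangleleft\triangleright\mathbb{P}_{n,d,0}$ and $\Delta_{n,d}\rightsquigarrow N_d(\mathsf{I}_d h,\mathsf{I}_d)$ under $\mathbb{P}_{n,d,s_n^{-1}h}$ for every fixed $h\in\mathbb{R}^d$.

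For the first claim, set $\mathbb{Q}_n^{(d)} := d^{-1}\sum_{i=1}^d\mathbb{P}_{n,d,s_n^{-1}h_{i,d}}$ and $L_{n,i} := d\mathbb{P}_{n,d,s_n^{-1}h_{i,d}}/d\mathbb{P}_{n,d,0}$. Since $0\le\varphi_n\le 1$, one has $|\mathbb{E}_{n,d,0}(\varphi_n) - d^{-1}\sum_i\mathbb{E}_{n,d,s_n^{-1}h_{i,d}}(\varphi_n)|\le\|\mathbb{P}_{n,d,0} - \mathbb{Q}_n^{(d)}\|_{TV}$, so it suffices to make this total variation distance small. I would deliberately work with total variation (a bounded functional of the experiment) rather than the $\chi^2$-divergence in the prelimit: then for fixed $d$ one gets $\|\mathbb{P}_{n,d,0} - \mathbb{Q}_n^{(d)}\|_{TV}\to\tau_\infty(d) := \|N_d(0,\mathsf{I}_d^{-1}) - d^{-1}\sum_i N_d(h_{i,d},\mathsf{I}_d^{-1})\|_{TV}$ as $n\to\infty$, because $d^{-1}\sum_i L_{n,i}$ converges in distribution to the Gaussian mixture likelihood ratio (by the LAN expansion and the continuous mapping theorem) while being uniformly integrable under $\mathbb{P}_{n,d,0}$ (its expectation tends to $1$ by contiguity, which upgrades convergence in distribution to $L^1$-convergence). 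It then remains to note that $\tau_\infty(d)\to 0$ as $d\to\infty$, which is the exact Ingster computation (Section 3.3.7 of~\cite{ingster}): using $v_{i,d}'\mathsf{I}_d v_{j,d} = \delta_{ij}$,
\begin{equation}
\chi^2\!\left(d^{-1}\sum_{i=1}^d N_d(h_{i,d},\mathsf{I}_d^{-1})\,\Big\|\,N_d(0,\mathsf{I}_d^{-1})\right) = \frac{e^{c_d^2} - 1}{d} = \frac{\max(\sqrt{d},\,e) - 1}{d}\ \xrightarrow[d\to\infty]{}\ 0,
\end{equation}
and $\tau_\infty(d)^2$ is at most one quarter of the quantity on the left.

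For the second claim, given any sequence $1\le i(n)\le d(n)$ I would use the marginal z-test $\nu_{n,d,i} := \mathbf{1}\{|v_{i,d}'\Delta_{n,d}| > \sqrt{c_d}\}$ built from the $v_{i,d}$-coordinate of the central sequence. For fixed $d$, since $v_{i,d}'\mathsf{I}_d v_{i,d} = 1$ and $\pm\sqrt{c_d}$ are continuity points of the relevant normal laws, LAN together with Le Cam's third lemma gives, uniformly over the finite set $\{1,\dots,d\}$, that $\mathbb{E}_{n,d,0}(\nu_{n,d,i})\to\gamma_1(d) := P(|N(0,1)| > \sqrt{c_d})$ and $\mathbb{E}_{n,d,s_n^{-1}h_{i,d}}(\nu_{n,d,i})\to\pi(d) := P(|N(c_d,1)| > \sqrt{c_d})$; moreover $\gamma_1(d)\to 0$ and $\pi(d)\ge\Phi(c_d - \sqrt{c_d})\to 1$ as $d\to\infty$, because $c_d\to\infty$.

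It remains to glue the fixed-$d$ facts together. For $d'\le d$ define the monotone envelope
\begin{equation}
A_n(d) := \max_{d'\le d}\Big[\,\big|\,\|\mathbb{P}_{n,d',0} - \mathbb{Q}_n^{(d')}\|_{TV} - \tau_\infty(d')\,\big| + \max_{i\le d'}\!\big(|\mathbb{E}_{n,d',0}(\nu_{n,d',i}) - \gamma_1(d')| + |\mathbb{E}_{n,d',s_n^{-1}h_{i,d'}}(\nu_{n,d',i}) - \pi(d')|\big) + e_n(d')\,\Big],
\end{equation}
where $e_n(d') := \mathbf{1}\{h_{i,d'}\notin H_{n,d'}\ \text{for some}\ i\le d'\}$. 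By the two preceding paragraphs and by $H_{n,d'}\uparrow\mathbb{R}^{d'}$ (cf.~\eqref{eqn:defHnd}), for each fixed $d$ one has $A_n(d)\to 0$ as $n\to\infty$, and $A_n(d)$ is non-decreasing in $d$; hence there is a non-decreasing unbounded sequence $p(n)$ in $\mathbb{N}$ with $A_n(p(n))\to 0$, and we may pick $M$ with $p(n)\ge 2$ for $n\ge M$. For any non-decreasing unbounded $d(n)\le p(n)$ we then have $A_n(d(n))\le A_n(p(n))\to 0$, which combined with $\tau_\infty(d(n)),\gamma_1(d(n))\to 0$ and $\pi(d(n))\to 1$ (all valid since $d(n)\to\infty$) yields: $e_n(d(n)) = 0$ for $n\ge M$, i.e.~\eqref{eqn:inclcontained} holds; $\|\mathbb{P}_{n,d(n),0} - \mathbb{Q}_n^{(d(n))}\|_{TV}\to 0$, which is the first claim; and $\nu_n := \nu_{n,d(n),i(n)}$ obeys $\mathbb{E}_{n,d(n),0}(\nu_n)\to 0$ and $\mathbb{E}_{n,d(n),\theta_{i(n),n}}(\nu_n)\to 1$, which is the second claim. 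The main obstacle I anticipate is exactly this interlocking of two limits, since the local alternative $c_d v_{i,d}$ itself drifts off to infinity as $d$ grows: for each \emph{fixed} $d$ it is a genuinely fixed local parameter, so bare LAN delivers all of the fixed-$d$ limits above, but one must control the prelimit errors uniformly in $i$ and nest them (via the monotone envelope) so that a single $p(n)$ works for every $d(n)\le p(n)$. A second, more technical point worth stressing is the decision to avoid a direct prelimit $\chi^2$ computation, which would require uniform integrability of the products $L_{n,i}L_{n,j}$ — not guaranteed by bare LAN — and to invoke the closed-form $\chi^2$ identity only in the Gaussian limit experiment, where it is exact.
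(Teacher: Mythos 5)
Your proposal is correct, and for the key step (Part 1) it takes a genuinely different route from the paper. The paper does not compute the second moment of the mixture likelihood ratio for the actual measures $\mathbb{P}_{n,d,s_n^{-1}h}$; instead it truncates the central sequence and invokes Theorem 6.76 of \cite{liese} to build exact exponential-family approximations $\mathbb{Q}_{n,d,h}$ with cumulant function $K_{n,d}(h)\to \tfrac12 h'\mathsf{I}_d h$, so that the Ingster-type $\chi^2$ computation can be carried out \emph{in the prelimit} for the $\mathbb{Q}$-mixture, and the result is transferred back by total-variation closeness of $\mathbb{Q}_{n,d,h}$ to $\mathbb{P}_{n,d,s_n^{-1}h}$. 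You instead keep the actual measures, show for fixed $d$ that $\|\mathbb{P}_{n,d,0}-\mathbb{Q}^{(d)}_n\|_{TV}$ converges to its Gaussian-limit counterpart $\tau_\infty(d)$ — using weak convergence of the mixture likelihood ratio together with convergence of its $\mathbb{P}_{n,d,0}$-expectation to one (contiguity), which supplies exactly the linear-growth uniform integrability needed — and perform the closed-form $\chi^2$ identity only in the limit experiment; your explicit remark that a direct prelimit $\chi^2$ computation would require integrability not guaranteed by bare LAN is precisely the obstacle the paper's truncation/exponential-family device is designed to circumvent, and your route handles it with arguably more elementary Le Cam machinery, at the price of losing the paper's explicit nonasymptotic control through $K_{n,d}$. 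Part 2 (a marginal z-type test on the $v_{i(n),d(n)}$-coordinate of the central sequence, with a threshold of smaller order than the drift $c_d$) and the final diagonalization via a monotone envelope and $N(d)\uparrow$, $p(n)=\max\{d:N(d)\le n\}$ are essentially the paper's Steps 3 and 1. One trivial slip: choosing $M$ so that $p(n)\ge 2$ for $n\ge M$ does not by itself give $e_n(d(n))=0$, i.e.\ \eqref{eqn:inclcontained}, for \emph{all} $n\ge M$; you should instead pick $M$ so that $A_n(p(n))<1$ for all $n\ge M$ (possible since $A_n(p(n))\to 0$), which then works uniformly over all admissible $d(n)\le p(n)$ because $e_n(\cdot)$ is $\{0,1\}$-valued and $A_n(\cdot)$ is monotone.
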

As discussed in Section \ref{sec:Gloc}, in the Gaussian location model the first part of Proposition \ref{prop:1} can be verified by an argument given in  \cite{ingster}. Variants of this result in \cite{ingster} have often been used for determining minimax rates for testing problems in Gaussian models, cf., e.g., Proposition 3.12 in \cite{ingster}, the arguments starting at Equation 43 on page 35 in \cite{Lepski2000}, or Section 6.3 of \cite{dumbgen}. For a general discussion of minimax lower bounds and related techniques see, e.g., \cite{tsyba}. 

We now state our first result on asymptotic enhanceability in high-dimensional testing problems. Its proof is given in Section \ref{sec:proofmain2}.
\begin{theorem}\label{thm:main2}
Suppose the double array of experiments \eqref{eqn:darray} satisfies Assumption \ref{as:lan}. Then, there exists a non-decreasing unbounded sequence $p(n)$ in $\N$, such that for any non-decreasing unbounded sequence $d(n)$ in $\N$ satisfying $d(n) \leq p(n)$ every sequence of tests with asymptotic size smaller than one is asymptotically enhanceable. 
\end{theorem}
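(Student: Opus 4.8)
The plan is to deduce Theorem \ref{thm:main2} directly from Proposition \ref{prop:1}, which does essentially all the work. Fix the sequence $p(n)$ and the constant $M$ provided by Proposition \ref{prop:1}, and let $d(n) \leq p(n)$ be an arbitrary non-decreasing unbounded sequence of natural numbers. Let $\varphi_n$ be a sequence of tests with asymptotic size smaller than one, say $\limsup_n \mathbb{E}_{n, d(n), 0}(\varphi_n) = \alpha < 1$. The first step is to use part (1) of Proposition \ref{prop:1} to locate a ``bad'' sequence of alternatives along which $\varphi_n$ has no asymptotic power beyond $\alpha$. Indeed, since for $n \geq M$ the quantity $\mathbb{E}_{n, d(n), 0}(\varphi_n) - d(n)^{-1}\sum_{i=1}^{d(n)} \mathbb{E}_{n, d(n), \theta_{i,n}}(\varphi_n) \to 0$, the averaged power $d(n)^{-1}\sum_{i=1}^{d(n)} \mathbb{E}_{n, d(n), \theta_{i,n}}(\varphi_n)$ has the same limsup $\alpha < 1$. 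Hence along a subsequence $n_k$ this average is bounded away from $1$, say by $\alpha' < 1$ for $k$ large, and for each such $k$ there must exist an index $i(n_k) \in \{1, \dots, d(n_k)\}$ with $\mathbb{E}_{n_k, d(n_k), \theta_{i(n_k), n_k}}(\varphi_{n_k}) \leq \alpha'$ (a minimum over the $d(n_k)$ summands cannot exceed their average). Extend $i(n)$ arbitrarily to all $n$ (e.g.\ $i(n) = 1$ off the subsequence) and set $\theta_n := \theta_{i(n), n}$. Then $\liminf_n \mathbb{E}_{n, d(n), \theta_n}(\varphi_n) \leq \alpha' < 1$.

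The second step is to produce the enhancement component. Part (2) of Proposition \ref{prop:1}, applied to this very sequence $i(n)$, yields a sequence of tests $\nu_n: \Omega_{n, d(n)} \to [0,1]$ with $\mathbb{E}_{n, d(n), 0}(\nu_n) \to 0$ and $\mathbb{E}_{n, d(n), \theta_n}(\nu_n) \to 1$. The first of these gives \eqref{eqn:asyref} (indeed the convergence is to $0$, not merely $\limsup \leq 0$), and combined with $\liminf_n \mathbb{E}_{n, d(n), \theta_n}(\varphi_n) < 1$ from the previous paragraph, the second gives \eqref{eqn:enhag}. Note also that $\theta_n \in \Theta_{d(n)}$ for $n \geq M$ by \eqref{eqn:inclcontained}, so it lies in the parameter space eventually, which is all Definition \ref{def:enh} requires. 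Thus $\nu_n$ is an enhancement component of $\varphi_n$, and $\varphi_n$ is asymptotically enhanceable. Since $\varphi_n$ was an arbitrary sequence of tests with asymptotic size smaller than one, and $d(n) \leq p(n)$ was an arbitrary non-decreasing unbounded sequence, this proves the theorem.

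I do not expect a serious obstacle in this deduction itself — it is a short bookkeeping argument once Proposition \ref{prop:1} is in hand. The only point requiring a little care is the passage to subsequences: asymptotic size is a $\limsup$ condition, so the averaged-power statement in part (1) of the proposition controls $\limsup$, and one must not claim that the average stays below $1$ for all large $n$, only along a subsequence; correspondingly $\theta_n$ only has to defeat $\varphi_n$ in the $\liminf$ sense demanded by \eqref{eqn:enhag}, which is exactly what the subsequence delivers. The genuine mathematical content — controlling the likelihood ratio of the mixture $d(n)^{-1}\sum_i \mathbb{Q}_{n,i}$ against the null uniformly in the LAN framework, and constructing the coordinatewise test $\nu_n$ with vanishing size and unit power — is entirely inside the proof of Proposition \ref{prop:1} (deferred to Section \ref{sec:proofmain}), and that is where the real work lies; here we merely harvest it.
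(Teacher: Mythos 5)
Your deduction is correct and follows essentially the same route as the paper: the paper likewise treats Theorem \ref{thm:main2} as an immediate corollary of Proposition \ref{prop:1}, using Part 1 to extract a sequence of indices $i(n)$ along which $\varphi_n$ has asymptotic power at most $\alpha<1$ and Part 2 to supply the enhancement component $\nu_n$ for $\theta_n=\theta_{i(n),n}$. Your subsequence detour is harmless but unnecessary — choosing $i(n)$ as a minimizing index for every $n$ already gives $\limsup_n \mathbb{E}_{n,d(n),\theta_{i(n),n}}(\varphi_n)\leq\alpha$ along the full sequence, which is exactly how the paper argues.
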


Recalling the consequences of asymptotic enhanceability of a test already emphasized around Equation \eqref{eqn:PEP} in Section \ref{sec:aden}, we would like to emphasize two implications of Theorem \ref{thm:main2}:

\begin{itemize}
\item Concerning the constructive value of Theorem \ref{thm:main2}: The theorem shows that when the dimension diverges sufficiently slowly (but cf.~also Section \ref{sec:nondecun}) any test of asymptotic size smaller than one can benefit from an application of the power enhancement principle. In particular, \textit{every} such test has \textit{removable} ``blind spots'' of inconsistency. Therefore, if some of them are of major practical relevance, it can be worthwhile to try to remove these via an application of the power enhancement principle. 
\item Theorem \ref{thm:main2} also comes with a distinct warning: when the dimension diverges sufficiently slowly (but, again, cf.~also Section \ref{sec:nondecun}), \emph{every} test with asymptotic size smaller than one is asymptotically enhanceable. In particular, while some ``blind spots'' can be removed by the power enhancement principle, \emph{any} improved test so obtained will still have removable ``blind spots'', as Theorem \ref{thm:main2} applies equally well to the improved test. These ``blind spots'' are test-specific, and are (implicitly or explicitly) determined by the practitioner through the choice of a test. This underscores the importance of carefully selecting the ``right'' test for a specific problem at hand.  
\end{itemize}

Finally, it is also worth noting that while Theorem \ref{thm:main2} guarantees the existence of a power enhancement component, it does not indicate \textit{how} such a sequence of tests can be obtained from an initial sequence of tests $\varphi_n$. Nevertheless, the  proof of Theorem \ref{thm:main2} and Part 2 of Proposition \ref{prop:1} give some insights into how certain enhancement components can be obtained for a given test $\varphi_n$. 

\smallskip

Theorem \ref{thm:main2} shows that every test with asymptotic size less than one is asymptotically enhanceable as long as the dimension of the parameter space diverges sufficiently \textit{slowly}. 
Hence, the results of Theorems \ref{thm:finited} and \ref{thm:finited2} concerning the typical existence of asymptotically unenhanceable tests in the case of $d(n) \equiv d$ do \emph{not} carry over to the case of slowly diverging $d(n)$. This parallels the discussion of the Gaussian location model in Section \ref{sec:Gloc}. Intuition would now suggest that every test must also be asymptotically enhanceable when the dimension of the parameter space increases very \textit{quickly}, as this only makes the testing problem ``more difficult'', thus broadening the scope for increasing the power of a test. As a consequence, one would be led to believe that under the assumptions of Theorem \ref{thm:main2}, the statement in the theorem can be extended to all diverging sequences $d(n)$. However, while correct in the Gaussian location model considered in Section \ref{sec:Gloc}, this is not true in general: asymptotically unenhanceable tests can exist under the assumptions of Theorem \ref{thm:main2} when the dimension of the parameter space increases sufficiently fast. The simple reason is that while Assumption \ref{as:lan} gives enough structure for slowly increasing $d(n)$, it does not impose enough structure for $d(n)$ increasing sufficiently quickly. As one potential consequence, for such $d(n)$, the testing problem can become asymptotically non-testable and hence any test becomes asymptotically unenhanceable for such regimes, cf.~Remark \ref{rem:testable}.\footnote{
Note, however, that Theorem \ref{thm:main2} implies that under Assumption \ref{as:lan} the testing problem is asymptotically testable for all sequences $d(n) \leq p(n)$.} For concreteness consider the following example:

\begin{example}\label{ex:counter}
Let $\Omega_{n, d} = \bigtimes_{i = 1}^n \R^{d}$ and let $\mathcal{A}_{n,d}$ be the Borel sets of $\bigtimes_{i = 1}^n \R^{d}$. Set $\mathbb{P}_{n, d, \theta}$ equal to the $n$-fold product of $N_d(\theta, d^{3} I_d)$, and let $\Theta_d = (-1, 1)^d$. Assumption \ref{as:lan} is obviously satisfied (with $s_n = \sqrt{n}$ and $\mathsf{I}_d = d^{-3} I_d$). We now show that for $d(n) = n$ the testing problem is asymptotically non-testable. It suffices to show that any sequence of tests $\nu_n: \Omega_{n, d(n)} \to [0, 1]$ such that $\lim_{n \to \infty} \mathbb{E}_{n, d(n), 0}(\nu_n) = 0$ must also satisfy
\begin{equation}
\lim_{n \to \infty} \mathbb{E}_{n, d(n), \theta_n}(\nu_n) = 0 \quad \text{ for any sequence } \theta_n \in \Theta_{d(n)}.
\end{equation}
By sufficiency of the vector of sample means, we may assume that $\nu_n$ is a measurable function thereof, which (since $d(n) = n$) is distributed as $N_{n}(\theta, n^2 I_n)$. It hence suffices to verify that the total variation distance between $N_{n}(\theta_n, n^2 I_n)$ and $N_{n}(0, n^2 I_n)$, or equivalently between $N_{n}(n^{-1}\theta_n,  I_n)$ and $N_{n}(0, I_n)$, converges to $0$ as $n \to \infty$. But since each coordinate of $\theta_n$ is bounded in absolute value by $1$, and thus $\|n^{-1} \theta_n \|_2 \leq n^{-1/2} \to 0$, this follows from, e.g., Example 2.3 in \cite{dasgupta}.
\end{example}

A condition that rules out a behavior as in Example \ref{ex:counter} and under which the conclusion of Theorem \ref{thm:main2} carries over to quickly diverging $d(n)$ is discussed in the subsequent section.

\subsection{Asymptotic enhanceability for any non-decreasing unbounded $d(n)$}\label{sec:nondecun}

The testing problem considered in Example \ref{ex:counter} becomes asymptotically non-testable in regimes where $d(n)$ increases too quickly with $n$. Informally speaking, the underlying reason is that increasing $d$ while keeping $n$ fixed leads to a ``loss in information'' in this example. To extend the statement in Theorem \ref{thm:main2} to any non-decreasing unbounded $d(n)$ such a behavior needs to be ruled out, i.e., we need to restrict ourselves to situations where an increase in the amount of data available implies an increase in information. This can be achieved by ensuring that for all triples of natural numbers $d_1 < d_2$ and $n$ the testing problem concerning a zero restriction on the parameter vector in $\left(\Omega_{n, d_1}, \mathcal{A}_{n, d_1}, \{\mathbb{P}_{n, d_1, \theta}: \theta \in \Theta_{d_1}\}\right)$ can be ``embedded'' into the testing problem concerning a zero restriction on the parameter vector in $\left(\Omega_{n, d_2}, \mathcal{A}_{n, d_2}, \{\mathbb{P}_{n, d_2, \theta}: \theta \in \Theta_{d_2}\}\right)$. As a consequence, the testing problem for dimension $d_2$ is then ``more informative'' than the testing problem for dimension $d_1$. To arrive at a mathematically precise condition, we shall consider two testing problems as equivalent if every power function in one experiment is the power function of a test in the other experiment, and vice versa.\footnote{Further discussion and related results concerning the comparison of testing problems based on their informativeness can be found in Chapter 4 of \cite{strasser}. Note that the discussion there is for dominated experiments which we do not require.} The above embedding-idea can then 
formally be stated as follows:

\begin{assumption}\label{as:compfin}
For all pairs of natural numbers $d_1 < d_2$ there exists a function $F = F_{d_1, d_2}$ from $\Theta_{d_1}$ to $\Theta_{d_2}$ satisfying $F(0) = 0$, and such that for every $n \in \N$:
\begin{enumerate}
\item  For every test $\varphi: \Omega_{n, d_2} \to [0, 1]$ there exists a test $\varphi': \Omega_{n, d_1} \to [0, 1]$ such that
\begin{equation}
\mathbb{E}_{n, d_2, F(\theta)}(\varphi) = \mathbb{E}_{n, d_1, \theta}(\varphi') \quad \text{ for every }  \theta \in \Theta_{d_1}.
\end{equation}
\item For every test $\varphi': \Omega_{n, d_1} \to [0, 1]$ there exists a test $\varphi: \Omega_{n, d_2} \to [0, 1]$ such that
\begin{equation}
\mathbb{E}_{n, d_1, \theta}(\varphi') = \mathbb{E}_{n, d_2, F(\theta)}(\varphi) \quad \text{ for every }  \theta \in \Theta_{d_1}.
\end{equation}
\end{enumerate}
\end{assumption}	
The following observation is sometimes useful (e.g., for regression models with fixed covariates or certain time series models) in verifying the preceding assumption.

\begin{remark}\label{rem:nod}
Assumption \ref{as:compfin} is satisfied with $F(\theta) = (\theta', 0)' \in \R^{d_2}$, if for all pairs of natural numbers $d_1 < d_2$, it holds that
\begin{equation}\label{eqn:compsp}
F(\Theta_1) = \Theta_{d_1} \times \{0\}^{d_2-d_1} \subseteq \Theta_{d_2},
\end{equation}
and (i) the sample space does not depend on the dimensionality of the parameter space, i.e., $\Omega_{n, d} = \Omega_n$ and $\mathcal{A}_{n,d} = \mathcal{A}_n$ holds for every $n \in \N$ and every $d \in \N$; and (ii) for all pairs of natural numbers $d_1 < d_d$ the condition $\theta \in \Theta_{d_1}$ implies $\mathbb{P}_{n, d_2, F(\theta)} = \mathbb{P}_{n, d_1, \theta}$. To see this, note that one can then use $\varphi' \equiv \varphi$ in Part 1, and $\varphi \equiv \varphi'$ in Part 2 of Assumption \ref{as:compfin}.
\end{remark}
In our running example, Assumption \ref{as:compfin} holds in the fixed covariates case, and also in the random covariates case under an additional assumption on the family of regressor distributions~$K_d$:
\begin{continuance}{ex:linreg}
Since $\Theta_d=\R^d$ condition \eqref{eqn:compsp} obviously holds. 

\noindent\textit{Fixed covariates:} Since $\Omega_{n,d}$ and $\mathcal{A}_{n,d}$ do not depend on $d$ it follows immediately from the observation in Remark \ref{rem:nod} that Assumption \ref{as:compfin} is satisfied.

\noindent\textit{Random covariates:} In this case further conditions on $K_d$ for $d \in \N$ are necessary. Recall that $K_d$ is a probability measure on the Borel sets of $\R^d$. Given two natural numbers $d_1 < d_2$ associate with $K_{d_2}$ its ``marginal distribution''
\begin{equation}\label{eqn:compK}
K_{d_1, d_2}(A) = K_{d_2}(A \times \R^{d_2-d_1}) \quad \text{ for every Borel set} \quad A \subseteq \R^{d_1}. 
\end{equation}
If for any two natural numbers $d_1 < d_2$ it holds that $K_{d_1} = K_{d_1, d_2}$, then Assumption \ref{as:compfin} is seen to be satisfied by a sufficiency argument; see Section \ref{sec:Exas2} for details.
\end{continuance}
We can now present our final result.
\begin{theorem}\label{thm:main}
Suppose the double array of experiments \eqref{eqn:darray} satisfies Assumptions \ref{as:lan} and \ref{as:compfin}. Then, for every non-decreasing and unbounded sequence $d(n)$ in $\N$ every sequence of tests with asymptotic size smaller than one is asymptotically enhanceable.
\end{theorem}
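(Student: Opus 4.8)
The plan is to reduce Theorem \ref{thm:main} to Theorem \ref{thm:main2} by using Assumption \ref{as:compfin} to transport the testing problem to a subexperiment whose parameter dimension grows slowly enough for Theorem \ref{thm:main2} to bite. Concretely, I would let $p(n)$ be the non-decreasing unbounded sequence produced by Theorem \ref{thm:main2} (available since Assumption \ref{as:lan} holds) and set $q(n) := \min(d(n), p(n))$; this is non-decreasing, unbounded, and satisfies $q(n) \leq p(n)$ and $q(n) \leq d(n)$. For each $n$ I would fix a map $F_n : \Theta_{q(n)} \to \Theta_{d(n)}$ with $F_n(0) = 0$: if $q(n) < d(n)$, take $F_n = F_{q(n), d(n)}$ as in Assumption \ref{as:compfin}; if $q(n) = d(n)$, take $F_n$ to be the identity, in which case the two conclusions of Assumption \ref{as:compfin} hold trivially (with $\varphi' = \varphi$, resp.\ $\varphi = \varphi'$). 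Thus in all cases, and for every $n$, every test on $\Omega_{n, d(n)}$ induces via $F_n$ a test on $\Omega_{n, q(n)}$ with the matching power function along $F_n(\Theta_{q(n)})$, and conversely.

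Given a sequence $\varphi_n$ on $\Omega_{n, d(n)}$ with $\limsup_n \mathbb{E}_{n, d(n), 0}(\varphi_n) < 1$, I would first invoke Part~1 of (the enlarged) Assumption \ref{as:compfin} to obtain tests $\varphi_n'$ on $\Omega_{n, q(n)}$ with $\mathbb{E}_{n, q(n), \theta}(\varphi_n') = \mathbb{E}_{n, d(n), F_n(\theta)}(\varphi_n)$ for all $\theta \in \Theta_{q(n)}$; since $F_n(0) = 0$, the sequence $\varphi_n'$ has the same asymptotic size as $\varphi_n$, hence smaller than one. Because $q(n) \leq p(n)$ is non-decreasing and unbounded and Assumption \ref{as:lan} holds, Theorem \ref{thm:main2} applies to the experiments $(\Omega_{n, q(n)}, \mathcal{A}_{n, q(n)}, \{\mathbb{P}_{n, q(n), \theta}\})$ and yields, via Definition \ref{def:enh}, an enhancement component $\nu_n'$ of $\varphi_n'$ together with a witnessing sequence $\eta_n \in \Theta_{q(n)}$, i.e.
\begin{equation}
\limsup_{n \to \infty} \mathbb{E}_{n, q(n), 0}(\nu_n') = 0 \qquad \text{and} \qquad 1 = \lim_{n \to \infty} \mathbb{E}_{n, q(n), \eta_n}(\nu_n') > \liminf_{n \to \infty} \mathbb{E}_{n, q(n), \eta_n}(\varphi_n').
\end{equation}
Then I would lift $\nu_n'$ back up using Part~2 of Assumption \ref{as:compfin}: there are tests $\nu_n$ on $\Omega_{n, d(n)}$ with $\mathbb{E}_{n, d(n), F_n(\theta)}(\nu_n) = \mathbb{E}_{n, q(n), \theta}(\nu_n')$ for all $\theta \in \Theta_{q(n)}$. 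Setting $\theta_n := F_n(\eta_n) \in \Theta_{d(n)}$, the choices $\theta = 0$ and $\theta = \eta_n$ give $\mathbb{E}_{n, d(n), 0}(\nu_n) = \mathbb{E}_{n, q(n), 0}(\nu_n') \to 0$, $\mathbb{E}_{n, d(n), \theta_n}(\nu_n) = \mathbb{E}_{n, q(n), \eta_n}(\nu_n') \to 1$, and $\mathbb{E}_{n, d(n), \theta_n}(\varphi_n) = \mathbb{E}_{n, q(n), \eta_n}(\varphi_n')$, so that $\liminf_n \mathbb{E}_{n, d(n), \theta_n}(\varphi_n) < 1$. Hence $(\nu_n, \theta_n)$ verifies \eqref{eqn:asyref} and \eqref{eqn:enhag}, i.e.\ $\nu_n$ is an enhancement component of $\varphi_n$, which is therefore asymptotically enhanceable.

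I do not expect a deep obstacle in this reduction: the mathematical substance lives entirely in Theorem \ref{thm:main2}, while Assumption \ref{as:compfin} is tailored precisely so that asymptotic enhanceability — which, by Definition \ref{def:enh}, depends only on the power functions of the competing tests — transfers faithfully between the ambient experiment and the embedded subexperiment. The points that need care are (i) that the comparison must be used in \emph{both} directions (Part~1 to push the given test $\varphi_n$ down to the low-dimensional subproblem, Part~2 to pull the enhancement component $\nu_n'$ back up); (ii) that $F_n(0) = 0$ is exactly what pins down the null, and hence the asymptotic size, under the transfer; and (iii) that the alternative $\eta_n$ witnessing enhanceability in dimension $q(n)$ is carried to the alternative $F_n(\eta_n)$ in dimension $d(n)$, against which the displayed identities simultaneously certify that $\varphi_n$ fails to be consistent while $\nu_n$ is consistent. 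Treating the boundary case $q(n) = d(n)$ with the identity map sidesteps the $d_1 < d_2$ restriction in Assumption \ref{as:compfin}.
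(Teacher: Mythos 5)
Your proposal is correct and follows essentially the same route as the paper's own proof: push $\varphi_n$ down to a slowly growing dimension bounded by the $p(n)$ from Theorem \ref{thm:main2} via Part 1 of the embedding assumption, enhance it there, and lift the enhancement component back with Part 2, carrying the witnessing alternative through $F_n$. The only (immaterial) differences are that the paper works with a weaker asymptotic version of Assumption \ref{as:compfin} (so exact equalities of power functions become uniform approximations) and takes $r(n)=\min(p(n),d(n)-1)$ to keep the strict inequality $r(n)<d(n)$, whereas you take $\min(d(n),p(n))$ and handle the boundary case with the identity map, which is equally valid.
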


The proof of this theorem is given in Section \ref{sec:proofthm} where we replace Assumption \ref{as:compfin} with a slightly weaker asymptotic version. While Theorem \ref{thm:main2} establishes the existence of a range of sufficiently slowly non-decreasing unbounded $d(n)$ along which every test is asymptotically enhanceable, Theorem \ref{thm:main} strengthens this property to hold for \textit{any} non-decreasing unbounded $d(n)$. This stronger conclusion comes from adding Assumption \ref{as:compfin}, which now allows one to transfer properties of experiments with slowly increasing $d(n)$ (established through Theorem \ref{thm:main2}) to properties of experiments with quickly increasing $d(n)$.

\section{Conclusion} 
Under weak assumptions, we have shown that there exist tests that are asymptotically unenhanceable in case $d$ is fixed, but that any test of asymptotic size smaller than one is asymptotically enhanceable if $d(n)$ grows to infinity. This latter finding, which constitutes the main insight of this article, reveals that any test possesses removable ``blind spots'' of inconsistency that can be removed by applying the power enhancement principle of \cite{fan2015}. Practitioners are thus forced to prioritize, as any test possesses removable ``blind spots''. It is hence recommended to prioritize deliberately. More specifically, before applying a test one should first analyze its power properties (e.g., numerically) to get an idea about its ``blind spots''. If power is low in regions that are highly ``practically relevant'', the power enhancement principle can provide a way to enhance it. To facilitate such an analysis, theoretically describing removable ``blind spots'' of new or already established tests, in addition to discussing their consistency regions, seems desirable. From a theoretical perspective, interpreting our main contribution as impossibility results, they imply that there are no ``asymptotically optimal tests'' when $d(n)$ grows to infinity, in the sense that the power enhancement principle can always be used to construct an asymptotically better test. In this sense, asymptotic unenhanceability is not generically a reasonable requirement of a test in high-dimensional testing problems. This information should prevent researchers from attempting to do the impossible, i.e., to aim for constructing  asymptotically unenhanceable tests. Accepting that any test has ``blind spots'', it could be an interesting future avenue of research to study whether one can construct tests with ``blind spots'' of a ``minimal'' size, or tests that are asymptotically unenhanceable over specific parameter (sub-)spaces.

\section{Appendix}

Throughout, given a random variable (or vector) $x$ defined on a probability space $(F, \mathcal{F}, \mathbb{Q})$ the image measure induced by $x$ is denoted by $\mathbb{Q} \circ x$. Furthermore, ``$\Rightarrow$'' denotes weak convergence.

\subsection{Additional material for Section~\ref{sec:Gloc}}\label{sec:consreg}

The following lemma shows that the test $\phi_n$ considered in Section~\ref{sec:Gloc} is consistent against $\theta_n$ if and only if $d(n)^{-1/2} n \|\theta_n\|_2^2 \to \infty$. The result is probably well known, but difficult to pinpoint in the literature in this form, and we therefore  provide a direct argument for completeness and for the convenience of the reader.

\begin{lemma}\label{lem:auxGaussloc}
Let~$\alpha \in (0, 1)$, let~$d(n)$ diverge to~$\infty$, and let~$X_1, \dots, X_n$ be i.i.d.~$N_{d(n)}(\theta, I_{d(n)})$. Then, the test~$\phi_n$, which rejects the null hypothesis $H_0: \theta = 0$ if the squared~$\|.\|_2$-norm of~$Z_n = n^{-1/2} \sum_{i = 1}^{n} X_i$ exceeds the~$1-\alpha$ quantile of a~$\chi^2$-distribution with~$d(n)$ degrees of freedom, has~(i) size~$\alpha$ for every~$n \in \N$; and (ii)~is consistent against a sequence~$\theta_n$, where~$\theta_n \in \R^{d(n)}$ for every~$n$, if and only if 
\begin{equation}\label{eqn:cond}
\rho_n := d(n)^{-1/2} n \|\theta_n\|_2^2 \to \infty.
\end{equation}
\end{lemma}

\begin{proof}
Part~(i) is trivial, because under the null ~$\|Z_n\|^2_2$ is~$\chi^2$-distributed with~$d(n)$ degrees of freedom. Consider now Part~(ii): Denote the~$1-\alpha$ quantile of a~$\chi^2$-distribution with~$d(n)$ degrees of freedom by~$\kappa_n$. Observe that~$\phi_n$ rejects if and only if
\begin{equation}\label{eqn:trejequiv}
(\|Z_n\|_2^2 - d(n))/\sqrt{2d(n)} > (\kappa_n - d(n))/\sqrt{2d(n)}.
\end{equation}
It follows immediately from the central limit theorem that under the null~$(\|Z_n\|_2^2 - d(n))/\sqrt{2d(n)} \Rightarrow N(0, 1)$. Consequently, we obtain from Part~(i) that~$(\kappa_n - d(n))/\sqrt{2d(n)}$ must converge to the~$1-\alpha$ quantile of a standard normal distribution,~$\eta$, say. Let~$\theta_n \neq 0$ be a sequence of alternatives. Writing~$ \|Z_n\|_2^2 = \|G_n\|_2^2 + \sqrt{n} 2 G_n' \theta_n + n \|\theta_n\|_2^2$ with~$G_n := Z_n - n^{1/2} \theta_n \sim N_{d(n)}(0, I_{d(n)})$ we have
\begin{equation}\label{eqn:teststat}
(\|Z_n\|_2^2 - d(n))/\sqrt{2d(n)} = (\|G_n\|_2^2 - d(n))/\sqrt{2d(n)} + (\sqrt{n} 2 G_n' \theta_n + n \|\theta_n\|_2^2)/\sqrt{2d(n)}.
\end{equation}
The distribution of the first summand to the right in the previous display does not depend on~$\theta_n$, and converges weakly to~$N(0, 1)$; the second summand to the right  is~$N(\mu_n, \sigma_n^2)$ distributed with
\begin{equation}
\mu_n := 2^{-1/2} \rho_n  \quad \text{ and } \quad \sigma_n^2 := \frac{2}{d^{1/2}(n)} \rho_n.
\end{equation}
To prove sufficiency, suppose that~$\theta_n$ satisfies Equation~\eqref{eqn:cond}. Obviously,~$\phi_n$ rejects if and only if 
\begin{equation}\label{eqn:lastap2norm}
\rho_n^{-1} (\|Z_n\|_2^2 - d(n))/\sqrt{2d(n)} > \rho_n^{-1} (\kappa_n - d(n))/\sqrt{2d(n)}.
\end{equation}
Since~$\rho_n \to \infty$ and because the sequence~$(\kappa_n - d(n))/\sqrt{2d(n)} \to \eta$, as pointed out above, the right hand side converges to~$0$. From Equation \eqref{eqn:teststat} and the observations succeeding it, we conclude that the sequence of random variables to the left in~\eqref{eqn:lastap2norm} converges in probability to~$2^{-1/2}$. This, together with the Portmanteau Theorem, implies that the test under consideration is consistent against~$\theta_n$. Next, we establish necessity: Suppose~$\rho_n$ converges to~$\rho$, say, along a subsequence~$n'$. Then~$N(\mu_n, \sigma_n^2) \Rightarrow \delta_{2^{-1/2} \rho}$ along~$n'$, and by Slutzky's lemma~and~\eqref{eqn:teststat} the sequence of random variables to the left in Equation \eqref{eqn:trejequiv} converges weakly to~$N(2^{-1/2} \rho, 1)$ along~$n'$. From~$(\kappa_n - d(n))/\sqrt{2d(n)} \to \eta$ and the Portmanteau Theorem it then immediately follows that the sequence of tests under consideration is not consistent against such a sequence of alternatives~$\theta_n$.
\end{proof}

\subsection{Proof of Theorem \ref{thm:finited}}\label{sec:prooffinited}

The statement trivially holds for $\alpha = 1$. Let $\alpha \in (0, 1)$. Suppose we could construct a sequence of tests $\varphi^*_n: \Omega_{n, d} \to [0, 1]$ with the property that for some $\varepsilon > 0$ such that $B(\varepsilon)=\cbr[0]{z \in\R^d:\enVert[0]{z}_2<\varepsilon}\subsetneqq \Theta_d$ (recall that $\Theta_d$ is throughout assumed to contain an open neighborhood of the origin) the following holds: $\mathbb{E}_{n, d, 0} (\varphi^*_n) \to \alpha$, and for any sequence $\theta_n \in B(\varepsilon)$ such that $n^{1/2} \| \theta_n \|_2 \to \infty$ it holds that $\mathbb{E}_{n, d, \theta_n} (\varphi^*_n) \to 1$. Given such a sequence of tests, we could define tests $\varphi_n = \min(\varphi_n^* + \psi_{n, d}(\varepsilon), 1)$ (cf.~Assumption \ref{as:finited}), and note that $\varphi_n$ has asymptotic size $\alpha$, and has the property that $\mathbb{E}_{n, d, \theta_n} (\varphi_n) \to 1$ for any sequence $\theta_n \in \Theta_d$ such that $n^{1/2} \|\theta_n\|_2 \to \infty$. But tests with the latter property are certainly not asymptotically enhanceable, because tests $\nu_n: \Omega_{n,d} \to [0, 1]$ can satisfy $\mathbb{E}_{n, d, 0}(\nu_n) \to 0$ and $\mathbb{E}_{n, d, \theta_n}(\nu_n) \to 1$ only if $\theta_n \in \Theta_d$ satisfies $n^{1/2} \|\theta_n\|_2 \to \infty$. To see this use Remark \ref{rem:enh} and recall that convergence of $n^{1/2} \|\theta_n\|_2$ along a subsequence $n'$ together with the maintained i.i.d.~and $\mathbb{L}_2$-differentiability assumption implies contiguity of $\mathbb{P}_{n', d, \theta_{n'}}$ w.r.t.~$\mathbb{P}_{n', d, 0}$ (this can be verified easily using, e.g., results in Section 1.5 of \cite{liese} and Theorem 6.26 in the same reference). It hence remains to construct such a sequence $\varphi_n^*$. To this end, denote by $L: \Omega \to \R^d$ (measurable) an $\mathbb{L}_2$-derivative of $\{ \mathbb{P}_{d, \theta}: \theta \in \Theta_d \}$ at $0$. In the following we denote expectation w.r.t.~$\mathbb{P}_{d, \theta}$ by  $\mathbb{E}_{d, \theta}$. By assumption the information matrix $\mathbb{E}_{d, 0}(LL') = \mathsf{I}_d$ is positive definite. Let $C > 0$ and define $L_C = L \mathbf{1}\{\|L\|_2 \leq C\}$. Since $\mathbb{E}_{d, 0}(L_C L')$ and $M(C)= \mathbb{E}_{d, 0}((L_{C}-\mathbb{E}_{d, 0}(L_{C}))(L_{C}-\mathbb{E}_{d, 0}(L_{C}))')$ converge to $\mathsf{I}_d$ as $C \to \infty$ (by the Dominated Convergence Theorem and $\mathbb{E}_{d, 0}(L) = 0$, for the latter see Proposition 1.110 in \cite{liese}), there exists a $C^*$ such that $\mathbb{E}_{d, 0}(L_{C^*} L')$ and $M := M(C^*)$ are non-singular. Now, by the $\mathbb{L}_2$-differentiability assumption (using again Proposition 1.110 in \cite{liese}), there exists an $\varepsilon > 0$ and a $c > 0$ such that $B(\varepsilon) \subsetneqq \Theta_d$, and such that
\begin{equation}\label{eqn:quadlow}
\|\mathbb{E}_{d, \theta}(L_{C^*}) - \mathbb{E}_{d, 0}(L_{C^*})\|_2 \geq c \|\theta\|_2 \quad \text{ holds for every } \quad \theta \in B(\varepsilon).
\end{equation}
Define on $\bigtimes_{i = 1}^n \Omega$ the functions $Z_n(\theta) := n^{-1/2}\sum_{i = 1}^n(L_{C^*}(\omega_{i,n}) - \mathbb{E}_{d, \theta}(L_{C^*}))$ for $\theta \in \Theta_d$, where $\omega_{i,n}$ denotes the $i$-th coordinate projection on $\bigtimes_{i = 1}^n \Omega$, and set $Z_n(0) = Z_n$. It is easy to verify that $\mathbb{P}_{n, d, \theta_n} \circ Z_n(\theta_n)$ is tight for any sequence $\theta_n \in \Theta_d$, and that by the central limit theorem $\mathbb{P}_{n, d, 0} \circ Z_n \Rightarrow N_d(0, M)$. Finally, let $\varphi_n^*: \Omega_{n, d} \to [0, 1]$ be the indicator function of the set $\{\|Z_n\|_2 \geq Q_{\alpha}\}$, where $Q_{\alpha}$ denotes the $1-\alpha$ quantile of the distribution of the Euclidean norm of a $N_d(0, M)$-distributed random vector. By construction $\mathbb{E}_{n, d, 0}(\varphi_n^*) \to \alpha$. It remains to verify $\mathbb{E}_{n, d, \theta_n} (\varphi^*_n) \to 1$ for any sequence $\theta_n \in B(\varepsilon)$ such that $n^{1/2} \| \theta_n \|_2 \to \infty$. Let $\theta_n$ be such a sequence. By the triangle inequality
\begin{equation}
\|Z_n\|_2 \geq n^{1/2} \| \mathbb{E}_{d, \theta_n}(L_{C^*}) - \mathbb{E}_{d, 0}(L_{C^*})\|_2 - \| Z_n(\theta_n) \|_2.
\end{equation}
Hence, $1-\mathbb{E}_{n, d, \theta_n} (\varphi^*_n)$ is not greater (cf.~\eqref{eqn:quadlow}) than $\mathbb{P}_{n, d, \theta_n}(c n^{1/2} \|\theta_n\|_2 - Q_{\alpha} \leq \| Z_n(\theta_n) \|_2) \to 0$, the convergence following from $\mathbb{P}_{n, d, \theta_n} \circ Z_n(\theta_n)$ being tight, and $c n^{1/2} \|\theta_n\|_2 \to \infty$. 
\qed

\subsection{Theorem \ref{thm:finited2}}\label{sec:finited2}
In this section we present our second result concerning asymptotic enhanceability in the fixed-dimensional case, which was already referred to in Section \ref{sec:mainq}.
\begin{theorem}\label{thm:finited2}
	Let $d(n) \equiv d$ for some $d\in \N$ and let $\|.\|$ be a norm on $\R^d$. Assume that a sequence of estimators $\hat{\theta}_n: \Omega_{n,d} \to \Theta_d$ (measurable) satisfies:
	\begin{enumerate}
		\item Uniform consistency: $\sup_{\theta \in \Theta_d} \mathbb{P}_{n, d, \theta}(\|\hat{\theta}_n - \theta \| > \varepsilon) \to 0$ for every $\varepsilon > 0$.
		\item Contiguity rate: there exists a nondecreasing sequence $s_n > 0$ diverging to $\infty$ such that for every sequence $\theta_n \in \Theta_d$ such that $s_n\|\theta_n \|$ is bounded, the sequence $\mathbb{P}_{n,d,\theta_n}$ is contiguous w.r.t.~$\mathbb{P}_{n,d,0}$.
		\item Local uniform tightness: There exists a $\delta > 0$ such that for every sequence $\theta_n$ in $\Theta_d$ satisfying $\|\theta_n\| \leq \delta$ the sequence of (image) measures $\mathbb{P}_{n,d,\theta_n} \circ [s_n(\hat{\theta}_n - \theta_n)]$ is tight.
	\end{enumerate}
	Then, for every $\alpha \in (0, 1]$ there exists a $C = C(\alpha) \geq 0$ such that the sequence of tests $\varphi_n = \mathbf{1} \{ s_n \|\hat{\theta}\| \geq C\}$ is not asymptotically enhanceable and has asymptotic size not greater than $\alpha$.
\end{theorem}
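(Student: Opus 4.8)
The plan is to mimic the strategy used in the proof of Theorem \ref{thm:finited}: first argue that a test which is consistent against \emph{every} sequence that is asymptotically distinguishable from the null cannot be asymptotically enhanceable, and then verify that the Wald-type test $\varphi_n = \mathbf{1}\{s_n\|\hat\theta_n\| \geq C\}$ has precisely this property for a suitable $C = C(\alpha)$, while also controlling its asymptotic size. By Remark \ref{rem:testable} (equivalently Remark \ref{rem:enh}), it suffices to show that $\varphi_n$ is consistent against every sequence $\theta_n \in \Theta_d$ for which $\mathbb{P}_{n,d,\theta_n}$ and $\mathbb{P}_{n,d,0}$ fail to be contiguous along every subsequence. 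Because of assumption~2 (contiguity rate), any such sequence must satisfy $s_n\|\theta_n\| \to \infty$ (otherwise it would be bounded along a subsequence, forcing contiguity along that subsequence). So the real task reduces to: if $s_n\|\theta_n\| \to \infty$, then $\mathbb{E}_{n,d,\theta_n}(\varphi_n) \to 1$.

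First I would pin down the constant $C$. For $\alpha = 1$ take $C = 0$, so $\varphi_n \equiv 1$ has size $1$ and is trivially unenhanceable. For $\alpha \in (0,1)$, apply local uniform tightness (assumption~3) with the \emph{constant} sequence $\theta_n \equiv 0$: the sequence of laws $\mathbb{P}_{n,d,0}\circ[s_n\hat\theta_n]$ is tight, so there is a radius $C = C(\alpha) > 0$ with $\limsup_n \mathbb{P}_{n,d,0}(s_n\|\hat\theta_n\| \geq C) \leq \alpha$; this gives asymptotic size at most $\alpha$. (One should also record that $C>0$ can be arranged, which is automatic since tightness lets us push the mass radius as large as needed.)

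Next, the consistency step. Fix a sequence $\theta_n \in \Theta_d$ with $s_n\|\theta_n\| \to \infty$. On the event $\{s_n\|\hat\theta_n - \theta_n\| \leq R\}$ one has, by the triangle inequality, $s_n\|\hat\theta_n\| \geq s_n\|\theta_n\| - R$, which eventually exceeds $C$ since $s_n\|\theta_n\| \to \infty$. Hence
\begin{equation}
1 - \mathbb{E}_{n,d,\theta_n}(\varphi_n) = \mathbb{P}_{n,d,\theta_n}(s_n\|\hat\theta_n\| < C) \leq \mathbb{P}_{n,d,\theta_n}(s_n\|\hat\theta_n - \theta_n\| > R)
\end{equation}
for all $n$ large (depending on $R$). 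The issue is that local uniform tightness (assumption~3) is only asserted for sequences with $\|\theta_n\| \leq \delta$, whereas here $\|\theta_n\|$ need not be small; and uniform consistency (assumption~1) controls $\|\hat\theta_n - \theta_n\|$ but not its $s_n$-scaled version. This is the main obstacle, and I would resolve it by a case split. If $\|\theta_n\|$ stays $\leq \delta$ (at least along the subsequence under consideration), tightness of $\mathbb{P}_{n,d,\theta_n}\circ[s_n(\hat\theta_n - \theta_n)]$ lets us choose $R = R(\eta)$ making the right-hand side $< \eta$ for large $n$, giving $\mathbb{E}_{n,d,\theta_n}(\varphi_n) \to 1$. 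If instead $\|\theta_n\| > \delta$ along a subsequence, then on that subsequence $\|\hat\theta_n\| \geq \|\theta_n\| - \|\hat\theta_n - \theta_n\| \geq \delta/2$ with probability tending to one by uniform consistency (assumption~1 with $\varepsilon = \delta/2$), and since $s_n \to \infty$ we get $s_n\|\hat\theta_n\| \geq s_n\delta/2 \geq C$ eventually, so again $\mathbb{E}_{n,d,\theta_n}(\varphi_n) \to 1$ along that subsequence. Combining the two cases over arbitrary subsequences yields $\mathbb{E}_{n,d,\theta_n}(\varphi_n) \to 1$ for the full sequence, which is exactly what is needed. Finally, assembling: $\varphi_n$ has asymptotic size $\leq \alpha$ and is consistent against every sequence that is asymptotically distinguishable from the null, so by Remark \ref{rem:testable} it is not asymptotically enhanceable. \qed
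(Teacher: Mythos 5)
Your proposal is correct and follows essentially the same route as the paper's proof: the same reduction via Remark \ref{rem:enh} and the contiguity-rate assumption to the claim that $s_n\|\theta_n\|\to\infty$ implies power tending to one, the same choice of $C$ from tightness at $\theta_n\equiv 0$, and the same subsequence case split ($\|\theta_n\|\leq\delta$ handled by local uniform tightness, $\|\theta_n\|>\delta$ handled by uniform consistency with $\varepsilon=\delta/2$ and $s_n\to\infty$). No changes needed.
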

\begin{proof}
If $\alpha = 1$ set $C = 0$, and note that $\varphi_n := \mathbf{1}\{s_n \|\hat{\theta}_n \| \geq  0\} \equiv 1$, which is obviously not asymptotically enhanceable and has size $1$. Next, consider the case where $\alpha \in (0, 1)$. The existence of a~$C$ ensuring the size requirement follows immediately from the local tightness assumption applied to the sequence $\theta_n = 0$. It remains to show that $\varphi_n := \mathbf{1}\{s_n \|\hat{\theta}_n \| \geq C\}$ is not asymptotically enhanceable. We claim that it suffices to verify that if $s_n \|\theta_n\|$ diverges to $\infty$ for $\theta_n \in \Theta_d$, then $\mathbb{E}_{n,d,\theta_n}(\varphi_n) \to 1$. This claim easily follows from the contiguity rate assumption, together with Remark \ref{rem:enh}.
Now, let $s_n \|\theta_n\|$ diverge to $\infty$. To show that $\mathbb{E}_{n,d,\theta_n}(\varphi_n) \to 1$ it suffices to verify that for every subsequence $n'$ of $n$ there exists a subsequence $n''$ of $n'$ along which $\mathbb{E}_{n,d,\theta_n}(\varphi_n) \to 1$. Let $n'$ be a subsequence of $n$. Then, (i) there exists a subsequence $n''$ of $n'$ such that 
$\|\theta_{n''}\| < \delta$ holds for every $n''$, or (ii) there exists a subsequence $n''$ of $n'$ such that 
$\|\theta_{n''}\| \geq \delta$ holds for every $n''$. Consider first case (i). By the local uniform tightness assumption, the sequence of image measures $\mathbb{P}_{n'',d,\theta_{n''}} \circ [s_{n''}(\hat{\theta}_{n''} - \theta_{n''})]$ is then tight. Let $\varepsilon \in (0, 1)$ and choose $K > 0$ such that $\mathbb{P}_{n'',d,\theta_{n''}} \circ [s_{n''}(\hat{\theta}_{n''} - \theta_{n''})] \left( \bar{B}_{\|.\|}(K) \right) \geq 1-\varepsilon$ holds for every $n''$, where $\bar{B}_{\|.\|}(K) := \{z \in \R^d: \|z\| \leq K\}$. We write
\begin{equation}
\mathbb{E}_{{n''},d,\theta_{n''}}(\varphi_{n''}) = \mathbb{P}_{{n''},d,\theta_{n''}} \circ [s_{n''}(\hat{\theta}_{n''} - \theta_{n''})] \left(\{z \in \R^d: \|z + s_{n''} \theta_{n''}\| \geq C \}\right),
\end{equation}
and note that $\{z \in \R^d: \|z + s_{n''} \theta_{n''}\| \geq C \}$ contains $\bar{B}_{\|.\|}(K)$ for all $n''$ large enough, recalling that $s_n \|\theta_n\| \to \infty$. Hence, the expectation in the previous display is not smaller than $1-\varepsilon$ for $n''$ large enough. Since $\varepsilon$ was arbitrary, it follows that $\mathbb{E}_{n,d,\theta_n}(\varphi_n) \to 1$ along $n''$. Next, we consider the case (ii). In this case, we write
\begin{equation}
\mathbb{E}_{{n''},d,\theta_{n''}}(\varphi_{n''}) = \mathbb{P}_{{n''},d,\theta_{n''}} \left( \| \hat{\theta}_{n''} \| \geq s_{n''}^{-1} C \right) \geq \mathbb{P}_{{n''},d,\theta_{n''}} \left( \| \hat{\theta}_{n''} \| \geq s_{n''}^{-1} C, \| \hat{\theta}_{n''} - \theta_{n''} \| < \delta/2 \right)
\end{equation}
For $n''$ large (since $s_n$ increases to $\infty$ and $\|\theta_{n''}\|\geq \delta$ for every $n''$) the right hand side equals $\mathbb{P}_{{n''},d,\theta_{n''}} ( \| \hat{\theta}_{n''} - \theta_{n''} \| < \delta/2 )$ which converges to $1$ by the uniform consistency assumption. 
\end{proof}

The contiguity rate in Theorem~\ref{thm:finited2} is often given by $s_n = \sqrt{n}$. For an extensive discussion of primitive conditions sufficient for the consistency and tightness assumptions imposed in the previous result we refer the reader to Sections 4 and 5 in Chapter 1 in \cite{ibragimov}, respectively; cf.~also pp.~144-146 in \cite{van2000asymptotic} and Section 5.4 in \cite{pfanzagl2}. We also emphasize that in the i.i.d.~case the local tightness assumption required in Theorem \ref{thm:finited2} is satisfied by the maximum likelihood estimator (MLE) under standard regularity conditions including smoothness and integrability properties of the log-likelihood function over a \emph{neighborhood} of $0$, cf., e.g., the discussion at the end of Section 7 in Chapter 1 in \cite{ibragimov} or the results in Section 7.5 in \cite{pfanzagl} (these regularity conditions, however, are stronger than the $\mathbb{L}_2$-differentiability condition at the point $0$ required by Theorem \ref{thm:finited}; thus Theorem \ref{thm:finited2} is not more general than Theorem \ref{thm:finited} in this respect). In the context of our running example $s_n = \sqrt{n}$ and the OLS estimator satisfies Conditions 1 and 3 in Theorem \ref{thm:finited2} under standard assumptions on the distribution of the errors $F$ and the regressors.

\subsection{Proof of Proposition \ref{prop:1}}\label{sec:proofmain}

The proof is divided into three steps. First we construct a sequence $p(n)$. Then, we verify that the first and second part of Proposition \ref{prop:1}, respectively, is satisfied for this sequence.

\smallskip

\subsubsection{Step 1: Construction of the sequence $p(n)$}\label{sec:proofmainfirststep}

Assumption 2 asserts (cf., Definition~6.63 of \cite{liese}) that for every fixed~$d \in \N$, there exists a sequence of measurable functions (a ``central sequence'') $Z_{n,d}: \Omega_{n,d} \to \R^d$ and a (positive definite and symmetric) information matrix $\mathsf{I}_d$, such that $\mathbb{P}_{n,d,0} \circ Z_n \Rightarrow N_d(0, \mathsf{I}_d)$ (as $n \to \infty$), and such that for every $h \in \R^d$ the (eventually well defined) log-likelihood ratio of $\mathbb{P}_{n,d,s_n^{-1} h}$ w.r.t.~$\mathbb{P}_{n,d,0}$ equals $h'Z_{n,d} -  h' \mathsf{I}_d h/2 + r_{n,d}(h)$ for a measurable sequence $r_{n,d}(h): \Omega_{n,d} \to \bar{\R}$ that converges to $0$ in $\mathbb{P}_{n,d,0}$-probability (as $n \to \infty$). By Theorem 6.76 in \cite{liese}, the following holds for every \textit{fixed} $d \in \N$: there exists a sequence $c(n,d) > 0$ satisfying $c(n,d) \to \infty$ as $n \to \infty$, such that the family of probability measures $\{\mathbb{Q}_{n, d, h}: h \in H_{n, d}\}$ on $(\Omega_{n, d}, \mathcal{A}_{n, d})$ defined via
\begin{equation}\label{eqn:expofam}
\frac{d\mathbb{Q}_{n, d, h}}{d\mathbb{P}_{n, d, 0}} = \exp \left( h' Z^*_{n, d} - K_{n, d}(h) \right),
\end{equation}
where $K_{n, d}(h) = \log(\int_{\Omega_{n,d}} \exp(h'Z^*_{n, d} ) d\mathbb{P}_{n, d, 0})$ and $Z^*_{n, d} = Z_{n, d} \mathbf{1}\{\|Z_{n, d}\|_2 \leq c(n,d) \}$, satisfies
\begin{equation}\label{eqn:Cclose1}
\lim_{n \to \infty}  |K_{n, d}( h) - .5h' \mathsf{I}_{d} h| = 0 \quad \text{ for every } h \in \R^{d},
\end{equation}
and
\begin{equation}\label{eqn:PcloseQ}
\lim_{n \to \infty} d_1(\mathbb{P}_{n, d, s_n^{-1} h}, \mathbb{Q}_{n, d,  h}) = 0 \quad \text{ for every } h \in \R^{d}.
\end{equation}
Here $d_1$ denotes the total variation distance, cf.~\cite{strasser} Definition 2.1. Furthermore (e.g., Theorem 6.72 in \cite{liese}), for every fixed $d \in \N$ and as $n \to \infty$
\begin{equation}\label{eqn:closenormal}
\mathbb{P}_{n, d, s_n^{-1} h} \circ Z_{n, d} \Rightarrow N_{d}(\mathsf{I}_{d} h, \mathsf{I}_{d}) \quad \text{ for every } h \in \R^{d}.
\end{equation}
Next, define the sequence $$a_i = \max\left(\left[.5 \log(i)\right]^{1/2}, 1\right) \quad \text{ for } i \in \N,$$ which (i) is positive, (ii) diverges to $\infty$, and satisfies (iii) $i^{-1} \exp(a_i^2) \to 0$. Now, let $\tilde{H}_d = \left\{0, a_d v_{1, d}, \hdots, a_d v_{d, d}\right\}$ and $H_d = a_d^{-2}\tilde{H}_d \setminus \{0\}$. By $H_{n,d} \uparrow \R^d$ (as $n \to \infty$) and by Equations \eqref{eqn:Cclose1}, \eqref{eqn:PcloseQ}, \eqref{eqn:closenormal} (and the continuous mapping theorem together with $e'\mathsf{I}_de = a_d^{-2}$ for every $e \in H_d$), for every $d \in \N$ there exists an $N(d) \in \N$ such that $n \geq N(d)$ implies (firstly)
\begin{equation}
\tilde{H}_d+\tilde{H}_d \subseteq H_{n, d},
\end{equation}
where, for $A \subseteq \R^d$, the set $A+A$ denotes $\{a + b: a \in A, ~ b \in A\}$, and (secondly)
\begin{align}
&\max_{h \in (\tilde{H}_d+\tilde{H}_d)}|K_{n, d}(h) - .5h' \mathsf{I}_{d} h| + \max_{h \in \tilde{H}_d} d_1(\mathbb{P}_{n, d, s_n^{-1} h}, \mathbb{Q}_{n, d, h}) \\
+ & \max_{(h,e) \in \tilde{H}_d \times  H_d} ~ d_{w}\left(\mathbb{P}_{n, d, s_n^{-1} h} \circ (e'Z_{n, d}), N_{1}(e' \mathsf{I}_{d} h, a_d^{-2})\right) 
\leq  d^{-1}.
\end{align}
Here $d_{w}(.,.)$ denotes a metric on the set of probability measures on the Borel sets of $\R$ that generates the topology of weak convergence, cf.~\cite{dudley} pp.~393 for specific examples. Note also that we can (and do) choose $N(1) < N(2) < \hdots$. Obviously, there exists a non-decreasing unbounded sequence $p(n)$ in $\N$ that satisfies $N(p(n)) \leq n$ for every $n \geq N(1) =: M$. Hence, the two previous displays still hold for $n \geq M$ when $d$ is replaced by $p(n)$. Moreover, the two previous displays also hold for $n \geq M$ when $d$ is replaced by any sequence of non-decreasing natural numbers $d(n) \leq p(n)$. 
The latter implying that for any such sequence $d(n)$ that is also unbounded we have
\begin{equation}\label{eq:contain}
\tilde{H}_{d(n)}+\tilde{H}_{d(n)} \subseteq H_{n, {d(n)}} \quad \text{ for } n \geq M
\end{equation}
and that (as $n \to \infty$)
\begin{equation}\label{eq:Cuniform1}
\max_{h \in (\tilde{H}_{d(n)} + \tilde{H}_{d(n)})} |K_{n, d(n)}(h) - .5h' \mathsf{I}_{d(n)} h| \to 0 
\end{equation}
\begin{equation}\label{eq:tvclose}
\max_{h \in \tilde{H}_{d(n)}} d_1(\mathbb{P}_{n, d(n), s_n^{-1} h}, \mathbb{Q}_{n, d(n), h}) \to 0,
\end{equation}
and
\begin{equation}\label{eq:wclose}
\max_{(h,e) \in \tilde{H}_{d(n)} \times H_{d(n)}}
d_{w}\left(\mathbb{P}_{n, d(n), s_n^{-1} h} \circ (e'Z_{n, d(n)}), N_{1}(e' \mathsf{I}_{d(n)} h, a_{d(n)}^{-2})\right)
\to 0.
\end{equation}
We shall now verify that the sequence $p(n)$ and the natural number $M$ defined above have the required properties. Let $d(n) \leq p(n)$ be an unbounded non-decreasing sequence of natural numbers.

\subsubsection{Step 2: Verification of Part 1}

Equation \eqref{eqn:inclcontained} follows from \eqref{eq:contain} which implies $\tilde{H}_{d(n)} \subseteq H_{n, d(n)}$ for $n \geq M$ (cf.~also Equation~\eqref{eqn:defHnd}). Now, let $\varphi_n: \Omega_{n, d(n)} \to [0, 1]$ be a sequence of tests. For $h \in H_{n, d(n)}$ abbreviate $\mathbb{P}_{n, d(n), s_n^{-1} h} = \mathbb{P}_{n,h}$ and $\mathbb{Q}_{n, d(n), h} = \mathbb{Q}_{n,h}$, and denote expectation w.r.t. $\mathbb{P}_{n,h}$ and $\mathbb{Q}_{n,h}$ by $\mathbb{E}^{P}_{n,h}$ and $\mathbb{E}^{Q}_{n,h}$, respectively. Furthermore, define for $n \geq M$ the probability measures $\mathbb{P}_n = \frac{1}{d(n)} \sum_{h \in \tilde{H}_{d(n)} \setminus \{0\}} \mathbb{P}_{n, h}$, and similarly $\mathbb{Q}_n = \frac{1}{d(n)} \sum_{h \in \tilde{H}_{d(n)} \setminus \{0\}} \mathbb{Q}_{n, h}$. Since for $n \geq M$
\begin{equation}
\big | \mathbb{E}_{n, d(n), 0} (\varphi_n) - d(n)^{-1} \sum_{h \in \tilde{H}_n\setminus \cbr[0]{0}} \mathbb{E}^P_{n, h} (\varphi_n) \big | \leq d_1(\mathbb{P}_{n, 0}, \mathbb{P}_n)
\end{equation}
(cf.~\cite{strasser} Lemma 2.3), it suffices to verify $d_1(\mathbb{P}_{n, 0}, \mathbb{P}_n) \to 0$. From \eqref{eq:tvclose} we see that it suffices to show that $d_{\mathsf{1}}(\mathbb{Q}_{n,0}, \mathbb{Q}_n) \to 0$. Since $\mathbb{Q}_n \ll \mathbb{Q}_{n, 0} = \mathbb{P}_{n,0}$ by \eqref{eqn:expofam},  $d^2_{\mathsf{1}}(\mathbb{Q}_{n,0}, \mathbb{Q}_n)$ equals (e.g., \cite{strasser} Lemma 2.4)
\begin{align}
\left(\frac{1}{2} ~ \mathbb{E}^Q_{n, 0} \left| \frac{d\mathbb{Q}_{n}}{d\mathbb{Q}_{n, 0}} - 1 \right| \right)^2  \leq \mathbb{E}^Q_{n, 0} \left( \frac{d\mathbb{Q}_{n}}{d\mathbb{Q}_{n, 0}} - 1 \right)^2 
= \mathbb{E}^P_{n, 0} \left( \frac{d\mathbb{Q}_{n}}{d\mathbb{P}_{n, 0}}\right)^2 - 1,
\end{align}
the first inequality following from Jensen's inequality. 

It remains to verify that $\limsup\limits_{n \to \infty} \mathbb{E}^P_{n, 0} \left( \frac{d\mathbb{Q}_{n}}{d\mathbb{P}_{n, 0}}\right)^2 \leq 1$: Let $a_{d(n)} = a(n)$, $k_{n, i} = K_{n, d(n)}(a(n) v_{i, d(n)})$, $k_{n, i, j} = K_{n, d(n)}(a(n) v_{i, d(n)} + a(n) v_{j, d(n)})$, and let $z^*_{n, i} = v'_{i, d(n)} Z^*_{n, d(n)}$. Let $n \geq M$. From \eqref{eqn:expofam} we see that
\begin{equation} \frac{d\mathbb{Q}_{n}}{d\mathbb{P}_{n, 0}} = d(n)^{-1} \sum_{i = 1}^{d(n)} \exp(a(n)z^*_{n, i} - k_{n, i})
\end{equation} 
and 
\begin{equation}
\mathbb{E}^P_{n, 0}  \big( \exp\big(a(n)z^*_{n, i} - k_{n, i}\big) \exp\big(a(n)z^*_{n, j} - k_{n, j}\big)  \big)  =  \exp\big(k_{n,i,j} - k_{n,i} - k_{n,j}\big).
\end{equation}
Thus, $\mathbb{E}^P_{n, 0} \left( \frac{d\mathbb{Q}_{n}}{d\mathbb{P}_{n, 0}}\right)^2$ is not greater than the sum of 
\begin{align}
&d(n)^{-1} \exp\big(a^2(n) \big)  \max_{1 \leq i \leq d(n)} 
\exp\big(k_{n,i,i}-2 k_{n,i} - a^2(n)\big) \quad \text{ and }\\
&\max_{1 \leq i < j \leq d(n)} \exp\big(k_{n,i,j} - k_{n,i} - k_{n,j}\big).
\end{align}
But the first sequence converges to $0$, and the second to $1$. This follows from $i^{-1} \exp(a_i^2) \to 0$, and since the sequences $\max\limits_{1\leq i \leq d(n)} |k_{n,i} - .5a^2(n)|$, $\max\limits_{1\leq i \leq d(n)} |k_{n,i, i} - 2a^2(n)|$, and $\max\limits_{1\leq i < j \leq d(n)} |k_{n,i, j} - a^2(n)|$ all converge to $0$ by Equation \eqref{eq:Cuniform1}.

\subsubsection{Step 3: Verification of Part 2}
Given a sequence $1 \leq i(n) \leq d(n)$ define $t_n = a(n)^{-1} v_{i(n), d(n)}' Z_{n, d(n)}$ and let $\nu_n=\mathbf{1}\{ t_n \geq 1/2 \}$. By definition (using the same notation as in Step 2)
\begin{equation}\label{eqn:type1}
\mathbb{E}^P_{n, 0}(\nu_n) = \mathbb{P}_{n, 0} \circ t_n \left([.5, \infty)\right).
\end{equation}
Since $0 \in \tilde{H}_{d(n)}$ and $a(n)^{-1} v_{i(n), d(n)} \in H_{d(n)}$, it follows from \eqref{eq:wclose} that 
\begin{equation}
d_{w} (\mathbb{P}_{n,0} \circ t_n, N_{1}(0, a(n)^{-2})) \to 0.
\end{equation}
But $a(n) \to \infty$ thus implies (via the triangle inequality, together with $d_{w}$-continuity of $(\mu, \sigma^2) \mapsto N_1(\mu, \sigma^2)$ on $\R \times [0, \infty)$, $N_1(\mu, 0)$ being interpreted as $\delta_{\mu}$, i.e., point mass at $\mu$) that $\mathbb{P}_{n,0} \circ t_n \Rightarrow \delta_0$. From the Portmanteau Theorem it hence follows that the sequence in \eqref{eqn:type1} converges to $\delta_0\left([.5, \infty)\right) = 0$. Concerning asymptotic power let $v_n = a(n) v_{i(n), d(n)}$. Note that $v_n \in \tilde{H}_{d(n)}$, $a(n)^{-1} v_{i(n), d(n)} \in H_{d(n)}$ and Equation \eqref{eq:wclose} implies $d_{w}(\mathbb{P}_{n, v_n} \circ t_n, N_{1}(1, a(n)^{-2}))
\to 0$, hence $\mathbb{P}_{n, v_n} \circ t_n \Rightarrow \delta_1$, and thus $\mathbb{E}^P_{n, v_n}(\nu_n) = \mathbb{P}_{n,    v_n} \circ t_n \left([.5, \infty)\right) \to 1$.
\qed

\subsection{Proof of Theorem \ref{thm:main2}}\label{sec:proofmain2}

To prove Theorem \ref{thm:main2}, choose for each $d \in \N$ an arbitrary orthogonal basis as in Proposition \ref{prop:1} to obtain a corresponding sequence $p(n)$, and let $d(n)\leq p(n)$ be non-decreasing and unbounded. Let the sequence of tests~$\varphi_n: \Omega_{n, d(n)} \to [0, 1]$ be of asymptotic size $\alpha <1$, i.e.,  $\limsup_{n \to \infty} \mathbb{E}_{n, d(n), 0}(\varphi_n) = \alpha < 1$. According to Definition \ref{def:enh} we need to show that $\liminf_{n \to \infty} \mathbb{E}_{n, d(n), \theta_n}(\varphi_n) < 1$ for a sequence $\theta_n \in \Theta_{d(n)}$ for which a sequence of tests $\nu_n: \Omega_{n, d(n)} \to [0, 1]$ exists such that
\begin{equation}\label{eqn:findt}
	\lim_{n \to \infty} \mathbb{E}_{n, d(n), 0}(\nu_n) = 0 \quad \text{ and } \quad \lim_{n \to \infty} \mathbb{E}_{n, d(n), \theta_n}(\nu_n) = 1.
\end{equation}
But Part 1 of Proposition \ref{prop:1} implies existence of a sequence $1\leq i(n)\leq d(n)$ such that 
\begin{equation}\label{eqn:blind}
	\limsup_{n \to \infty} \mathbb{E}_{n, d(n), \theta_{i(n), n}} (\varphi_n) \leq \alpha < 1,
\end{equation}
and Part 2 of Proposition \ref{prop:1} verifies existence of a sequence of tests $\nu_n$ as in Equation \eqref{eqn:findt} for~$\theta_n = \theta_{i(n), n}$. \qed

\bigskip

Note that the above proof actually exploits a power enhancement component for a sequence $\theta_n$ against which $\varphi_n$ has asymptotic power not only smaller than one, but in fact at most $\alpha$.

\subsection{Verification of Assumption \ref{as:compfin} for the random covariates case in our running example}\label{sec:Exas2}

We show that Assumption \ref{as:compfin} is satisfied for $F(\theta) = (\theta', 0)' \in \R^{d_2}$. For convenience, denote a generic element of $\Omega_{n,d} = \bigtimes_{	i = 1}^n (\R \times \R^d)$ by $z_{d} = (y, x^{(1)}, \hdots, x^{(d)})$ for $y, x^{(1)}, \hdots, x^{(d)} \in \R^n$. Let $d_1 < d_2$ and $n$ be natural numbers. Consider the experiment 
\begin{equation}\label{eqn:auxex}
	(\Omega_{n, d_2}, \mathcal{A}_{n, d_2}, \{\mathbb{P}_{n, d_2, F(\theta)}: \theta \in \Theta_{d_1}\}),
\end{equation}
define the map $T: \Omega_{n, d_2} \to \Omega_{n, d_1}$ as $T(z_{d_2}) = z_{d_1}$, and note that $T$ is sufficient for \eqref{eqn:auxex} (e.g., Theorem 20.9 in \cite{strasser}). Note further that $\mathbb{P}_{n, d_2, F(\theta)} \circ T = \mathbb{P}_{n, d_1, \theta}$ holds for every $\theta \in \Theta_{d_1}$ under our additional assumption that $K_{d_1} = K_{d_1, d_2}$. That Assumption~\ref{as:compfin} is satisfied now follows from Corollaries 22.4 and 22.6 in \cite{strasser}. 

\subsection{Proof of Theorem \ref{thm:main}} \label{sec:proofthm}
\subsubsection{A weaker version of Assumption \ref{as:compfin}}
Note that Assumption \ref{as:compfin} imposes restrictions to hold for every $n \in \N$. Since asymptotic enhanceability concerns large-sample properties of tests, it is not surprising that a (weaker) asymptotic version of Assumption \ref{as:compfin} suffices for establishing the same conclusion as in Theorem \ref{thm:main}. The asymptotic (and weaker) version of Assumption \ref{as:compfin} we work with subsequently is as follows:
\begin{assumption} \label{as:comp}
For all pairs of natural numbers $d_1 < d_2$ there exists a function $F = F_{d_1, d_2}$ from $\Theta_{d_1}$ to $\Theta_{d_2}$ satisfying $F(0) = 0$, and such that for any two non-decreasing unbounded sequences $r(n)$ and $d(n)$ in $\N$ such that $r(n) < d(n)$ the following holds, abbreviating $F_{r(n), d(n)}$ by $F_n$:
\begin{enumerate}
	\item For every sequence of tests $\varphi_n: \Omega_{n, d(n)} \to [0, 1]$, there exists a sequence of tests $\varphi_n': \Omega_{n, r(n)} \to [0, 1]$ such that
	\begin{equation}\label{eqn:compEQ1}
	\sup_{\theta \in \Theta_{r(n)}}  \left| \mathbb{E}_{n, d(n), F_n(\theta)}(\varphi_n)  - \mathbb{E}_{n, r(n), \theta}(\varphi_n') \right| \to 0 \text{ as } n \to \infty.
	\end{equation}
	\item For every sequence of tests $\varphi_n': \Omega_{n, r(n)} \to [0, 1]$, there exists a sequence of tests $\varphi_n: \Omega_{n, d(n)} \to [0, 1]$ such that
	\begin{equation}\label{eqn:compEQ2}
	\sup_{\theta \in \Theta_{r(n)}} \left| \mathbb{E}_{n, r(n), \theta}(\varphi_n') - \mathbb{E}_{n, d(n), F_n(\theta)}(\varphi_n) \right| \to 0 \text{ as } n \to \infty.
	\end{equation}
\end{enumerate}
\end{assumption}

\subsubsection{Proof of Theorem \ref{thm:main}}

We shall now prove the conclusion of Theorem \ref{thm:main} under slightly weaker conditions by replacing Assumption \ref{as:compfin} by Assumption \ref{as:comp}. Theorem \ref{thm:main} then follows immediately as a Corollary.
\begin{theorem}\label{thm:main_general}
Suppose the double array of experiments \eqref{eqn:darray} satisfies Assumptions \ref{as:lan} and \ref{as:comp}. Then, for every non-decreasing and unbounded sequence $d(n)$ in $\N$ every sequence of tests with asymptotic size smaller than one is asymptotically enhanceable.
\end{theorem}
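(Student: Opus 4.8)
The plan is to reduce the general case to the ``slowly diverging'' situation already settled in Theorem \ref{thm:main2}, by invoking Assumption \ref{as:comp} to embed, into the given dimension-$d(n)$ problem, a lower-dimensional testing problem whose dimension is kept under the sequence $p(n)$ supplied by Theorem \ref{thm:main2}. Fix such a $p(n)$ (obtained from Proposition \ref{prop:1} for some fixed choice of eigenbases), let $d(n)$ be an arbitrary non-decreasing unbounded sequence in $\N$, and let $\varphi_n : \Omega_{n, d(n)} \to [0,1]$ be of asymptotic size $\alpha = \limsup_{n \to \infty} \mathbb{E}_{n, d(n), 0}(\varphi_n) < 1$. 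Since asymptotic enhanceability depends only on the relevant $\limsup$/$\liminf$/$\lim$ as $n \to \infty$, it is harmless to assume $d(n) \geq 2$ for all $n$ (only finitely many terms are affected). Put $r(n) := \min(p(n), d(n) - 1)$, which is a non-decreasing, unbounded sequence in $\N$ with $1 \leq r(n) \leq p(n)$ and $r(n) < d(n)$; hence Assumption \ref{as:comp} applies to the pair $(r(n), d(n))$, and we write $F_n := F_{r(n), d(n)}$, recalling that $F_n(0) = 0$.

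First I would pull $\varphi_n$ back to dimension $r(n)$. By Part 1 of Assumption \ref{as:comp} there is a sequence of tests $\varphi_n' : \Omega_{n, r(n)} \to [0,1]$ with $\sup_{\theta \in \Theta_{r(n)}} | \mathbb{E}_{n, d(n), F_n(\theta)}(\varphi_n) - \mathbb{E}_{n, r(n), \theta}(\varphi_n') | \to 0$. Taking $\theta = 0$ and using $F_n(0) = 0$ shows that $\mathbb{E}_{n, r(n), 0}(\varphi_n')$ and $\mathbb{E}_{n, d(n), 0}(\varphi_n)$ have the same $\limsup$, so $\varphi_n'$ is of asymptotic size $\alpha < 1$ in the dimension-$r(n)$ sequence of testing problems. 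Since the double array satisfies Assumption \ref{as:lan} and $r(n) \leq p(n)$ is non-decreasing and unbounded, Theorem \ref{thm:main2} applies and $\varphi_n'$ is asymptotically enhanceable: by Definition \ref{def:enh} there are $\theta_n' \in \Theta_{r(n)}$ and tests $\nu_n' : \Omega_{n, r(n)} \to [0,1]$ with $\mathbb{E}_{n, r(n), 0}(\nu_n') \to 0$, $\mathbb{E}_{n, r(n), \theta_n'}(\nu_n') \to 1$, and $\liminf_{n \to \infty} \mathbb{E}_{n, r(n), \theta_n'}(\varphi_n') < 1$.

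Then I would push everything forward to dimension $d(n)$ via $\theta_n := F_n(\theta_n') \in \Theta_{d(n)}$. Applying Part 1 of Assumption \ref{as:comp} once more, now at $\theta = \theta_n' \in \Theta_{r(n)}$, gives $| \mathbb{E}_{n, d(n), \theta_n}(\varphi_n) - \mathbb{E}_{n, r(n), \theta_n'}(\varphi_n') | \to 0$, whence $\liminf_{n \to \infty} \mathbb{E}_{n, d(n), \theta_n}(\varphi_n) = \liminf_{n \to \infty} \mathbb{E}_{n, r(n), \theta_n'}(\varphi_n') < 1$. Applying Part 2 of Assumption \ref{as:comp} to $\nu_n'$ produces tests $\nu_n : \Omega_{n, d(n)} \to [0,1]$ with $\sup_{\theta \in \Theta_{r(n)}} | \mathbb{E}_{n, r(n), \theta}(\nu_n') - \mathbb{E}_{n, d(n), F_n(\theta)}(\nu_n) | \to 0$; evaluating at $\theta = 0$ yields $\mathbb{E}_{n, d(n), 0}(\nu_n) \to 0$, and at $\theta = \theta_n'$ it yields $\mathbb{E}_{n, d(n), \theta_n}(\nu_n) \to 1$. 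Thus $\nu_n$ and $\theta_n$ witness, via Definition \ref{def:enh}, that $\varphi_n$ is asymptotically enhanceable; Theorem \ref{thm:main} then follows since Assumption \ref{as:compfin} trivially implies Assumption \ref{as:comp}. The step I expect to be the real crux — the one that genuinely uses the strength of Assumption \ref{as:comp} — is the transfer of \emph{power} between the two problems along the \emph{moving} alternative $\theta_n'$ (and its image $\theta_n$), rather than merely at fixed parameter points; this is precisely why the comparability in Assumption \ref{as:comp} is imposed uniformly over $\theta \in \Theta_{r(n)}$. The remaining ingredients are routine: constructing the intermediate dimension sequence $r(n)$ with its four required properties (which is what forces the harmless restriction to $d(n) \geq 2$), and using $F_n(0) = 0$ so that the embedded problems share the same null and asymptotic size is preserved under both the pull-back and the push-forward.
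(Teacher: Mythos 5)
Your proposal is correct and follows essentially the same route as the paper's proof: the same intermediate dimension $r(n)=\min(p(n),d(n)-1)$, pull-back of $\varphi_n$ via Part 1 of Assumption \ref{as:comp}, application of Theorem \ref{thm:main2} at dimension $r(n)$, and push-forward of the alternative and the enhancement component via $F_n$ (using Parts 1 and 2 and $F_n(0)=0$). The only cosmetic difference is that you normalize $d(n)\geq 2$ at the outset while the paper simply notes that $r(n)<d(n)$ and $r(n)\in\N$ hold eventually; both are harmless.
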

\begin{proof}
Let $d(n)$ be a non-decreasing and unbounded sequence in $\N$, and let $\varphi_n: \Omega_{n, d(n)} \to [0, 1]$ be of asymptotic size $\alpha < 1$.
We apply Theorem \ref{thm:main2} to obtain a sequence $p(n)$ as in that theorem. Let $r(n) \equiv \min(p(n),d(n)-1)$, a non-decreasing unbounded sequence that eventually satisfies $r(n) \in \N$ and $r(n) < d(n)$. By Part 1 of Assumption \ref{as:comp} there exists a sequence of tests $\varphi_n': \Omega_{n, r(n)} \to [0, 1]$ such that \eqref{eqn:compEQ1} holds. In particular $\varphi_n'$ also has asymptotic size $\alpha$, recalling that $F_n(0) = 0$ holds by assumption.  Therefore, by Theorem \ref{thm:main2} (applied  with ``$d(n) \equiv r(n)$''), $\varphi_n'$ is asymptotically enhanceable, i.e., there exist tests $\nu_n': \Omega_{n, r(n)} \to [0, 1]$ and a sequence $\theta_n \in \Theta_{r(n)}$ such that $\mathbb{E}_{n, r(n), 0}(\nu_n') \to 0$ and
\begin{equation}
1 = \lim_{n \to \infty} \mathbb{E}_{n, r(n), \theta_n}(\nu_n') > \liminf_{n \to \infty} \mathbb{E}_{n, r(n), \theta_n}(\varphi_n') = \liminf_{n \to \infty} \mathbb{E}_{n, d(n), F_n(\theta_n)}(\varphi_n),
\end{equation}
the second equality following from \eqref{eqn:compEQ1}. By Part 2 of Assumption \ref{as:comp}, and using again $F_n(0) = 0$, tests $\nu_n: \Omega_{n, d(n)} \to [0, 1]$ exist such that $\mathbb{E}_{n, d(n), 0}(\nu_n) \to 0$ and $\mathbb{E}_{n, d(n), F_n(\theta_n)}(\nu_n) \to 1$. Hence $\varphi_n$ is asymptotically enhanceable. 
\end{proof}

\bibliographystyle{ims}
\bibliography{refs}

\end{document}